  \newtheorem{theorem}{Theorem}
  \newtheorem{lemma}[theorem]{Lemma}
  \newtheorem{corollary}[theorem]{Corollary}
  \newtheorem{remark}[theorem]{Remark}
  \newtheorem{example}[theorem]{Example}
  \DeclareMathOperator{\osc}{osc}
  \DeclareMathOperator{\support}{supp}
  \DeclareMathOperator{\distance}{dist}
  \providecommand{\Xint}[1]{\mathchoice
    {\XXint\displaystyle\textstyle{#1}}%
    {\XXint\textstyle\scriptstyle{#1}}%
    {\XXint\scriptstyle\scriptscriptstyle{#1}}%
    {\XXint\scriptscriptstyle\scriptscriptstyle{#1}}%
    \!\int}
  \providecommand{\XXint}[3]{{\setbox0=\hbox{$#1{#2#3}{\int}$}
      \vcenter{\hbox{$#2#3$}}\kern-.5\wd0}}
  \providecommand{\dashint}{\mathop{\Xint-}}
  \def\novargreektmp{1}
    \def\phi{\varphi}
    \def\Phi{\varPhi}
    \def\epsilon{\varepsilon}
  \providecommand{\beta}{{\boldsymbol{\eta}}}
  \providecommand{\abstmp}[2]{{#1\lvert{#2}#1\rvert}}
  \providecommand{\abs}[1]{\abstmp{}{#1}}
  \providecommand{\settmp}[2]{{#1\{{#2}#1\}}}
  \providecommand{\set}[1]{\settmp{}{#1}}
\long\def\unmarkedfootnote#1{{\long\def\@makefntext##1{##1}\footnotetext{#1}}}
\newcommand{\dd}{\mathrm{d}}
\newcommand{\dz}{\,\mathrm{d}z}
\newcommand{\dy}{\,\mathrm{d}y}
\newcommand{\dx}{\,\mathrm{d}x}
\newcommand{\dt}{\,\mathrm{d}t}
\newcommand{\dr}{\,\mathrm{d}r}
\newcommand{\R}{\mathbb R}
\newcommand{\N}{\mathbb N}
\newcommand{\rn}{\mathbb R^n}
\DeclareMathOperator{\tr}{tr}
\newtheorem{defs}{Definition}[section]
\newtheorem{theorem}[defs]{Theorem}
\newtheorem{example}[defs]{Example}
\newtheorem{lemma}[defs]{Lemma}
\newtheorem{remark}[defs]{Remark}
\title
[Inclusion relations among
fractional Orlicz-Sobolev spaces]{
Inclusion relations among
fractional Orlicz-Sobolev spaces and a Littlewood-Paley characterization
}
\author {Dominic Breit \& Andrea Cianchi}
\address{Dominic Breit, 
  TU Clausthal, Institute of Mathematics\\
Erzstra\ss e 1, Clausthal-Zellerfeld, Germany} \email{dominic.breit@tu-clausthal.de}
\address{Andrea Cianchi, Dipartimento di Matematica e Informatica \lq\lq U. Dini"\\
Universit\`a di Firenze,
Viale Morgagni 67/a,
50134 Firenze,
Italy} \email{andrea.cianchi@unifi.it}
\urladdr{}
\numberwithin{equation}{section}
\begin{document}
\begin{abstract} Embeddings among fractional Orlicz-Sobolev spaces with different smoothness are characterized. The equivalence of their Gagliardo-Slobodeckij norms to norms defined via Littlewood-Paley decompositions, oscillations, or Besov type difference quotients is established as well. This equivalence, of independent interest, is a key tool in the proof of the relevant embeddings. They also   rest upon a new optimal inequality for convolutions in Orlicz spaces.
\end{abstract}

\maketitle

%


%
%
%
%
%
%
%
%



\setcounter{tocdepth}{1}

\unmarkedfootnote{
\par\noindent {\it Mathematics Subject
Classification: 46E35, 46E30.}
\par\noindent {\it Keywords:  fractional Orlicz-Sobolev spaces, embeddings, convolution inequalities, equivalent norms, Littlewood-Paley decomposition} 
}

%

\section{Introduction}\label{intro}
The fractional Orlicz-Sobolev spaces $W^{s,A}(\rn)$ of order $s\in (0,1)$ are defined via a Gagliardo-Slobodeckij seminorm of Luxemburg type built upon a Young function $A$. Spaces with fractional smoothness $s\in (1, \infty) \setminus \mathbb N$ can accordingly be defined by requiring that their members be functions whose weak derivatives of order $[s]$, the integer part of $s$, have a finite Orlicz-Sobolev seminorm of order $\{s\}$, the fractional part of $s$.

This family of spaces includes the classical fractional Sobolev spaces $W^{s,p}(\rn)$, for  $p\in [1,\infty)$, which are reproduced with the choice $A(t)=t^p$.  The spaces  $W^{s,p}(\rn)$ are a natural functional setting for nonlocal elliptic problems with power-type nonlinearities. When dealing with the same kind of problems, but driven by non-polynomial nonlinearities, the more general spaces $W^{s,A}(\rn)$ provide a suitable ambient for the solutions. 
 The study of the spaces $W^{s,A}(\rn)$ has been initiated in recent years, and various traits of their theory have already been shaped - see e.g. \cite{ACPS_lim0, ACPS_limit1, ACPS, BaS, Bon:19, deNBS, KKPF}.

Embedding theorems for the spaces $W^{s,A}(\rn)$ are central in view of the applications mentioned above. Embeddings into Banach spaces endowed with norms depending only on (global) integrability properties of functions were established in \cite{ACPS, ACPSinf}. Among other results, the optimal Orlicz target space $L^{A_{\frac ns}}(\rn)$ for embeddings of homogeneous fractional Orlicz-Sobolev spaces associated with any admissible $s$ and $A$ was detected in those papers. The embedding
\begin{equation}\label{intoOrlicz}
W^{s,A}(\rn) \to L^{A_{\frac ns}}(\rn)
\end{equation}
 for the non-homogeneous spaces $W^{s,A}(\rn)$ follows from them. The Young function $A_{\frac ns}$   takes an explicit form depending only on  $A$ and on the ratio $\frac ns$.

On the other hand, embeddings between Orlicz-Sobolev spaces with different smoothness, of the form 
\begin{equation}\label{embintro}
    W^{s,A}(\rn) \to W^{r,B}(\rn),
\end{equation}
where $0<r<s$, seem to be still missing the literature. Our purpose is to fill in this gap and provide an analogue of embedding \eqref{intoOrlicz}, which associates with any admissible  $A$, $r$ and $s$  an appropriate conjugate Young function $B$ making embedding \eqref{embintro} true. The main result of this paper asserts that embedding \eqref{embintro} holds with  
$$B=A_{\frac n{s-r}}.$$ 
 Namely, the function $B$ in embedding \eqref{embintro} is given exactly according to the same rule as the function  $A_{\frac ns}$ in \eqref{intoOrlicz}, save that $s$ has to be replaced by the smoothness gap $s-r$. Let us emphasize that, like the result for \eqref{intoOrlicz}, our conclusions about embeddings \eqref{embintro} do not require restrictions on the behavior of the Young function $A$, such as the $\Delta_2$ or the $\nabla_2$ condition. 
In particular when  $A(t)=t^p$, we recover the customary embedding
$
    W^{s,p}(\rn) \to W^{r, \frac{np}{n-(s-r)p}}(\rn),
$
where  $0<s-r<n$ and $1\leq p <\frac n{s-r}$. Embeddings for fractional Zygmund-Sobolev spaces, namely Orlicz-Sobolev spaces associated with Young functions of \lq\lq power times a logarithm" type  are also deduced as special cases of our general result.
%

Despite the perfect formal matching of embeddings \eqref{intoOrlicz} and \eqref{embintro} for $r=0$, 
 their proofs are completely different. Gagliardo-Slobodeckij type seminorms do not seem suitable to deal with embeddings between fractional Orlicz-Sobolev spaces of different smoothness. A substantial part of this paper is thus devoted to proving the equivalence of these norms to more manageable norms of diverse types. Precisely,  Besov type norms, norms involving integral oscillations, and norms based on Littlewood-Paley decompositions are considered. 
Such an equivalence is classically well-known for the spaces $W^{s,p}(\rn) $, but was
not available for the more general spaces $W^{s,A}(\rn)$. Interestingly, unlike the conventional $W^{s,p}(\rn)$ theory, borderline situations do not require separate treatment in the Orlicz-Sobolev ambient. For instance, the case when $A$ is infinite-valued,  and, in particular, when  $L^A(\rn)=L^\infty(\rn)$, is included in our discussion.  The occurrence of arbitrary Young functions prevents us from using standard tools, such as approximation by smooth functions which need not be dense in the spaces in question, at several steps in the proofs, and hence calls for the introduction of novel strategies.


The embedding theorem is stated in Section \ref{main}, while definitions of the new norms and their equivalences are the subject of Section \ref{equivnorm}. 
Proofs  these equivalences are offered in Sections \ref{sec:besov}--\ref{sec:LP}. 
In particular, the alternative Littlewood-Paley characterization of the spaces $W^{s,A}(\rn)$ is pivotal in our approach to Theorem \ref{thm:main}, which is accomplished in Section \ref{sec:emb}. Another critical ingredient is an optimal inequality for convolutions in Orlicz spaces, of independent interest for possible different applications. The latter improves a classical convolution inequality by O'Neil from \cite{oneil2}, whose use would not lead to optimal embeddings in general. Our convolution inequality  
 is proved in Section \ref{sec:conv}, that also collects basic definitions and properties of Young functions and Orlicz spaces. 

\section{Embeddings between fractional Orlicz-Sobolev spaces}\label{main}
 A function $A: [0, \infty) \to [0, \infty]$ is called a
Young function if it is convex, left-continuous, vanishing at $0$, and neither
identically equal to $0$, nor to $\infty$. 
The Luxemburg norm associated with  $A$  is defined
as
\begin{align*}
\|u\|_{L^A(\rn)}=\inf\left\{\lambda>0:\,\,\int_{\rn}
A\Big(\frac{|u(x)|}{\lambda}\Big)\,dx\leq 1\right\}
\end{align*}
for any function $u \in \mathcal M(\rn)$, the space of real-valued measurable functions in $\rn$. The
collection of all  functions $u$ for which such norm is finite is
called the  Orlicz space $L^A(\rn)$, and  is a Banach function
space. 
\par Given $s\in (0,1)$, the seminorm $|u|_{s,A, \rn}$ of  a function $u \in \mathcal M (\rn)$ is given by
\begin{equation}\label{aug340}
|u|_{s,A, \rn}
		= \inf\left\{\lambda>0: \int_{\rn} \int_{\rn}A\left(\frac{|u(x)-u(y)|}{\lambda|x-y|^s}\right)\frac{\dd\,(x,y)}{|x-y|^n}\le1\right\}\,.
\end{equation}
Notice that, if $|u|_{s,A, \rn}<\infty$, then $u\in L^1_{\rm loc}(\rn)$.
\\
The fractional-order Orlicz-Sobolev space $W^{s,A}(\rn)$ is then defined as
\begin{equation}\label{aug344}
W^{s,A}(\rn ) = \{ u \in L^A(\rn ):\, |u|_{s,A, \rn} <\infty\},
\end{equation}
and is a   Banach space equipped with the norm
$$\|u\|_{W^{s,A}(\rn)} = \|u\|_{L^A(\rn)} + |u|_{s,A, \rn}.$$

The definition of  the space $W^{s,A}(\rn)$ 
for $s\in (1, \infty) \setminus \N$ requires the notion of integer-order Orlicz-Sobolev space $W^{m,A}(\rn )$  associated with 
  $m \in \N$.
The latter is defined as
\begin{equation}\label{orliczsobolev}
W^{m,A}(\rn ) = \{u \in L^A(\rn): |\nabla ^k u| \in L^A(\rn), \, k=1, \dots, m\}\,.
\end{equation}
Here, $\nabla ^k u$ denotes the vector of all weak derivatives of $u$ of order $k$. If $m=1$, we also simply write $\nabla u$ instead of $\nabla^1 u$.
The space $W^{m,A}(\rn)$ is a Banach space equipped with the
norm
$$\|u\|_{W^{m,A}(\rn )} = \sum _{k=0}^m  \|\nabla^k u
\|_{L^A(\rn)},$$
where $\nabla ^0u$ has to be interpreted as $u$.
\\ Now, given $s\in (1, \infty) \setminus \N$, 
the fractional-order Orlicz-Sobolev space $W^{s,A}(\rn )$ is defined by
\begin{equation}\label{aug344higher}
W^{s,A}(\rn ) = \{ u \in W^{[s],A}(\rn): \big|\nabla ^{[s]}u\big|_{\{s\},A, \rn}<\infty\},
\end{equation}
and is endowed with the norm
$$\|u\|_{W^{s,A}(\rn)} = \|u\|_{W^{[s],A}(\rn)} + \big|\nabla ^{[s]}u\big|_{\{s\},A, \rn}.$$
 
The formulation of our embedding between Orlicz-Sobolev spaces associated with different smoothness parameters $s$ requires the following notions.
\\
Let $r,s \in (0, \infty) \setminus \N$, with   $0<s-r<n$, and let $A$  be a Young function  such that
\begin{align}\label{dec1}
\int_0 \Big(\frac{t}{A(t)}\Big)^{\frac{s-r}{n-s+r}}\dt<\infty.
\end{align}
We define the Young function $A_{\frac{n}{s-r}}$ as
\begin{align}\label{dec2}
A_{\frac{n}{s-r}}(t)=A(H^{-1}(t))\quad \text{for $t\geq0$,}
\end{align}
where
\begin{align}\label{dec2'}
H(t)=\bigg(\int_0^t\Big(\frac{\tau}{A(\tau)}\Big)^\frac{s-r}{n-s+r}\,\mathrm{d}\tau\bigg)^{\frac{n-s+r}{n}} \quad \text{for $t\geq0$.}
\end{align}
If $\lim_{t \to \infty}H(t)<\infty$, then 
 $H^{-1}$ has to be interpreted as the generalized left-continuous inverse of $H$. Therefore, $H^{-1}(t)=\infty$, and hence $A_{\frac{n}{s-r}}(t)=\infty$, if $t> \int_0^\infty\big(\frac{\tau}{A(\tau)}\big)^\frac{s-r}{n-s+r}\,\mathrm{d}\tau$.
%

\begin{theorem}\label{thm:main}
Let $r,s \in (0, \infty) \setminus \N$, with $0<s-r<n$.  Assume that the Young function $A$  satisfies condition \eqref{dec1}, and let $A_{\frac{n}{s-r}}$ be the Young function defined by \eqref{dec2}. Then  
\begin{align}\label{eq:thmmain}
W^{s, A}(\R^{n})\to W^{r, A_{\frac{n}{s-r}}}(\R^{n}).
\end{align}
Moreover, there exists a constant $c=c(n, s, r)$ such that
\begin{equation}\label{normineq}
\|u\|_{W^{r, A_{\frac{n}{s-r}}}(\R^{n})} \leq  c \|u\|_{ W^{s, A}(\R^{n})}
\end{equation}
for every $u \in W^{s, A}(\R^{n})$.
\end{theorem}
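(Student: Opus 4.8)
The strategy is to reduce embedding \eqref{eq:thmmain} to the first-order case $s,r\in(0,1)$ and then to exploit the alternative characterizations of the spaces $W^{s,A}(\rn)$ announced in the introduction. First I would dispose of the integer-order derivative structure: writing $s=[s]+\{s\}$ and $r=[r]+\{r\}$, one checks that $s-r$ equals $(\{s\}-\{r\})+([s]-[r])$, so differentiating $[r]$ times and commuting differentiation with the Gagliardo-Slobodeckij seminorm reduces the claim to an embedding of the form $W^{\sigma,A}(\rn)\to W^{\rho,A_{n/(\sigma-\rho)}}(\rn)$ with $\sigma-\rho=s-r$ and $\sigma,\rho$ either both in $(0,1)$ or with one of them integer and the relevant fractional parts handled by the same bookkeeping. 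Thus the core case is $0<\rho<\sigma<1$, $0<\sigma-\rho<n$, which is what I would treat in detail.

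The heart of the argument is to pass from the Gagliardo-Slobodeckij seminorm $|u|_{\sigma,A,\rn}$ to a Littlewood-Paley type description. Using the equivalence of norms from Section \ref{equivnorm}, one writes $u=\sum_{j\ge 0}\Delta_j u$ in terms of a dyadic decomposition, and the $W^{\sigma,A}$-norm is equivalent to a Luxemburg-type quantity measuring the family $\{2^{\sigma j}\Delta_j u\}_j$. The point is that each block $\Delta_j u$ has Fourier support in an annulus of size $\sim 2^j$; to estimate $\|\Delta_j u\|_{L^{B}(\rn)}$ with $B=A_{n/(\sigma-\rho)}$ I would write $\Delta_j u=\Delta_j u * \varphi_j$ for an appropriate rescaled Schwartz function and invoke the new \emph{optimal convolution inequality in Orlicz spaces} from Theorem \ref{lem:cont0}, which upgrades O'Neil's inequality. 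This is precisely the step where the smoothness gap $\sigma-\rho$ enters: the scaling of $\varphi_j$ produces a gain of $2^{-(\sigma-\rho)j}$ at the price of passing from $L^A$ to the Orlicz target dictated by the dimensional balance $n/(\sigma-\rho)$, exactly as in the proof of \eqref{intoOrlicz} but with $s$ replaced by $\sigma-\rho$. Summing the resulting estimates over $j$ in the Littlewood-Paley norm for $W^{\rho,B}$, and then translating back to the Gagliardo-Slobodeckij norm via the equivalence of norms in the target space, yields \eqref{normineq}. The $L^A\to L^B$ control of the zeroth-order term $u$ itself follows from embedding \eqref{intoOrlicz} (or directly from condition \eqref{dec1}), which also explains why \eqref{dec1} is exactly the hypothesis needed for $B$ to be well defined.

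The main obstacle, as the introduction stresses, is that none of this can rely on density of smooth functions, since $W^{s,A}(\rn)$ need not be separable and smooth functions need not be dense when $A$ fails $\Delta_2$ or $\nabla_2$; every manipulation with the Littlewood-Paley pieces must therefore be justified directly at the level of the Luxemburg functionals, and the convolution estimates must be genuinely sharp rather than obtained by interpolation or duality. A secondary technical difficulty is the bookkeeping that merges the fractional Gagliardo-Slobodeckij seminorm with the integer-order derivative norms when $s$ or $r$ exceeds $1$, and the need to check that borderline cases — in particular $A$ infinite-valued, so that $L^A(\rn)=L^\infty(\rn)$ is possible — are genuinely covered by the same chain of estimates without ad hoc modifications. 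Once the convolution inequality of Section \ref{sec:conv} and the norm equivalences of Sections \ref{sec:besov}--\ref{sec:LP} are in hand, assembling \eqref{normineq} is then a matter of carefully tracking constants, which depend only on $n$, $s$, $r$.
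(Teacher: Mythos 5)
Your core argument coincides with the paper's: replace the Gagliardo--Slobodeckij norms by the Littlewood--Paley norms of Theorem \ref{thm:equi}, write each block as $\varphi_i(D)u=\check\eta_i*\varphi_i(D)u$ with $\eta_i(\xi)=\eta(2^{-i}\xi)$ equal to $1$ on $\mathrm{spt}(\varphi_i)$, observe that $\|2^{i(r-s)}\check\eta_i\|_{L^{n/(n-s+r)}(\rn)}\le c$ uniformly in $i$ by scaling, apply the sharp convolution inequality in its modular form \eqref{conv5} (with $s$ replaced by $s-r$) to obtain $\int_{\rn} A_{\frac{n}{s-r}}(2^{ir}|\varphi_i(D)u|)\,dx\le \int_{\rn} A(c\,2^{is}|\varphi_i(D)u|)\,dx$ blockwise, and sum over $i$. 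Two corrections are in order. First, the inequality doing the work is Theorem \ref{lem:cont} (kernel in $L^{\frac{n}{n-s+r}}(\rn)$, the O'Neil-improving one), not Theorem \ref{lem:cont0}, which is the elementary $L^1$-kernel bound and produces no gain in the Young function; your description makes clear you mean the former, but the citation should be fixed.

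Second, and more substantively, your preliminary reduction to the case $s,r\in(0,1)$ is both unnecessary and, as stated, unavailable: when $s-r\ge 1$ there are no $\sigma,\rho\in(0,1)$ with $\sigma-\rho=s-r$, and \lq\lq commuting $[r]$ derivatives with the seminorm'' cannot by itself transfer smoothness across integer thresholds --- that is precisely the difficulty the Littlewood--Paley characterization is introduced to absorb. The paper makes no such reduction: Theorem \ref{thm:equi} is proved for every $s\in(0,\infty)\setminus\N$, and the blockwise convolution estimate above is completely insensitive to whether $s$ or $r$ exceeds $1$, so the proof of \eqref{normineq} is uniform in $s,r$. Relatedly, the low-frequency piece is handled in the paper by the same convolution trick, using a cutoff equal to $1$ on $\mathrm{spt}(\varphi_0)$ (inequality \eqref{apr160}), rather than by invoking \eqref{intoOrlicz}; your route through \eqref{intoOrlicz} would give membership in $L^{A_{\frac ns}}(\rn)$ and would still require a separate verification that this, together with $u\in L^A(\rn)$, controls the $L^{A_{\frac{n}{s-r}}}(\rn)$ norm globally. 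With the reduction deleted and the convolution step run as in Theorem \ref{lem:cont}, your plan is the paper's proof.
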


\begin{remark}\label{integralform}
{\rm    Inequality \eqref{normineq} can also be stated in an integral form, which is possibly handier in some applications. Indeed, since the constant $c$ in \eqref{normineq} is independent of $A$, this inequality can be applied with $A(t)$ replaced by the function $A_M(t) = A(t)/M$, where 
\begin{equation}\label{modular}
M = \sum _{k \leq [s]}\int_{\rn}A  \big(  |\nabla ^k  u| \big)\dx +  \int_{\rn} \int_{\rn}A  \left(\frac{|\nabla^{[s]}u(x)-\nabla^{[s]}u(y)|}{|x-y|^{\{s\}}}\right)\frac{\dd\,(x,y)}{|x-y|^n}.
\end{equation}
One can verify that the function $(A_M)_{\frac{n}{s-r}}$, associated with $A_M$ as in \eqref{dec2}--\eqref{dec2'}, obeys  $(A_M)_{\frac{n}{s-r}}(t)=  M^{-1} A_{\frac{n}{s-r}}( tM^{-\frac{s-r}n})$. Hence, from the definition of Luxemburg norms, one infers that 
\begin{equation}\label{intineq}
\sum _{k \leq [r]}\int_{\rn}A_{\frac{n}{s-r}} \bigg( \frac{ |\nabla ^k  u|}{c M^{\frac{s-r}n}} \bigg)\dx +  \int_{\rn} \int_{\rn}A_{\frac{n}{s-r}} \left(\frac{|\nabla^{[r]}u(x)-\nabla^{[r]}u(y)|}{c|x-y|^{\{r\}}M^{\frac{s-r}n}}\right)\frac{\dd\,(x,y)}{|x-y|^n} \leq M
\end{equation}
for $u \in W^{s, A}(\R^{n})$.}
\end{remark}
%

We conclude this section with a couple of applications of Theorem  \ref{thm:main} to fractional Orlicz-Sobolev spaces built upon Young functions $A$ of a special form. The former deals with Young functions $A$  of power type, which is possibly modified near $0$ in the supercritical regime in such a way that condition \eqref{dec1} is satisfied. The latter is a generalization of the former, in that Young functions $A$ of \lq\lq power times a logarithm" type are considered.

\begin{example}\label{ex01}
{\rm Let $r,s \in (0, \infty) \setminus \N$, with $0<s-r<n$.
\\ (i) Assume that $1\leq p  <\frac n{s-r}$ and 
\begin{equation}\label{jan60}
A(t) = t^p \qquad \text{for $t \geq 0$.}
\end{equation}
Then
$$
A_{\frac{n}{s-r}}(t) \approx t^{\frac{np}{n-(s-r)p}} \qquad \text{for $t \geq 0$.}
$$
Here, and in what follows, the relation $\lq\lq \approx"$ means equivalence in the sense of Young functions.
Hence, Theorem \ref{thm:main} reproduces the classical embedding 
$$W^{s,p}(\rn) \to W^{r, \frac{np}{n-(s-r)p}}(\rn).$$
(ii) Assume that the Young function $A$ is such that
\begin{equation}\label{jan61}
A(t) = \begin{cases} t^p & \quad \text{near infinity}
\\ t^{p_0} & \quad \text{near zero,}
\end{cases}
\end{equation}
where   $ p  \geq \frac n{s-r}$ and $1 \leq p_0<\frac n{s-r}$. 
\\ If  $ p  >\frac n{s-r}$, then 
\begin{equation}\label{jan62}
A_{\frac{n}{s-r}}(t) \approx \begin{cases} \infty & \quad \text{near infinity}
\\ t^{\frac{np}{n-(s-r)p_0}} & \quad \text{near zero.}
\end{cases}
\end{equation}
\\ If  $ p  = \frac n{s-r}$, then 
\begin{equation}\label{jan62'}
A_{\frac{n}{s-r}}(t)  \approx \begin{cases} e^{t^{\frac n{n-(s-r)}}} & \quad \text{near infinity}
\\ t^{\frac{np}{n-(s-r)p_0}} & \quad \text{near zero.}
\end{cases}
\end{equation}
In particular, in the latter case, a fractional embedding of Pohozaev-Trudinger-Yudovich type into an exponential space is obtained.
}
  
\end{example}

\begin{example}\label{ex02}{\rm
Let $0<r<s<\infty$, with $0<s-r<n$.
Consider   a Young function $A$ such that
\begin{equation}\label{dec250}
A(t) \approx \begin{cases}
t^p  (\log t)^\alpha & \quad \text{near infinity}
\\
 t^{p_0} (\log \frac 1t)^{\alpha_0} & \quad \text{near zero,}
\end{cases}
\end{equation}
where: 
$$\text{either $p>1$ and $\alpha \in \R$ or $p=1$ and $\alpha \geq 0$,}$$
and
\begin{equation*}
\text{either $1< p_0< \frac n{s-r}$ and $\alpha_0\in \R$, or $p_0=1$ and  $\alpha_0 \leq 0$, or $p_0=\frac n{s-r}$ and $\alpha_0 > \frac n{s-r} -1$.}
\end{equation*}
%
%
%
%
%
%
Theorem \ref{thm:main} then tells us that embedding  \eqref{eq:thmmain}  holds,  where
\begin{equation}\label{dec255}
A_{\frac n{s-r}}(t) \approx \begin{cases} t^{\frac {n{p_0}}{n-(s-r){p_0}}} (\log \frac 1t)^{\frac {n\alpha_0}{n-(s-r){p_0}}} & \quad \text{ if $1\leq {p_0}< \frac n{s-r}$ }
\\
e^{-t^{-\frac{n}{(s-r)(\alpha_0 +1)-n}}} & \quad  \text{if ${p_0}=\frac n{s-r}$ and $\alpha_0 > \frac n{s-r} -1$}
\end{cases} \quad \text{near zero,}
\end{equation}
and
\begin{equation}\label{dec256}
A_{\frac n{s-r}}(t) \approx \begin{cases} t^{\frac {np}{n-(s-r)p}} (\log t)^{\frac {n \alpha  }{n-(s-r)p}} & \quad  \text{ if $1\leq p< \frac n{s-r}$ }
\\
e^{t^{\frac{n}{n-(s-r)(\alpha +1)}}}&  \quad \text{if  $p=\frac n{s-r}$ and $\alpha < \frac n{s-r} -1$}
\\
e^{e^{t^{\frac n{n-(s-r)}}}} &  \quad \text{if  $p=\frac n{s-r}$ and $\alpha = \frac n{s-r} -1$}
\\ \infty &  \quad \text{otherwise}
\end{cases} \quad \text{near infinity.}
\end{equation}
 
}

\end{example}

\section{Equivalent norms}\label{equivnorm}

In this section, we collect definitions and results about alternate norms in the space $W^{s,A}(\rn)$. Norms based on Littlewood-Paley decompositions are exploited in the proof of Theorem \ref{thm:main}. Their equivalence to the Gagliardo-Slobodeckij norm is in turn intertwined with the proof of the equivalence to Besov and oscillation-type norms.

\subsection{Norms of Besov type}\label{sub:besov}

Given $s\in (0,1)$ and a Young function $A$, define the Besov type seminorm  $|u|_{B^{s,A}(\rn)}$ of  a function $u \in \mathcal M (\rn)$   as  

\begin{equation}\label{besov1}
|u|_{B^{s,A}(\rn)}= \inf\bigg\{\lambda >0: 
\sum_{j=1}^{n}\int_{0}^{\infty}\int_{\R^{n}}A\Big(\frac{|u(x+\varrho e_{j})-u(x)|}{\lambda \,\varrho^s}\Big)\dx\frac{\dd \varrho}{\varrho}\leq 1\bigg\}.
\end{equation}
Here, $e_j$ denotes the $j$-th unit vector of the canonical orthonormal basis in $\rn$.
\\ The Besov space $B^{s,A}(\rn)$  is then defined as 
\begin{equation}\label{besov2}
B^{s,A}(\rn) = \{u \in L^A(\rn):  |u|_{B^{s,A}(\rn)}<\infty\},
\end{equation}
and is equipped with the norm given by
\begin{equation}\label{besov3}
\|u\|_{B^{s,A}(\rn)}  = \|u\|_{L^A(\rn)}+ |u|_{B^{s,A}(\rn)}.
\end{equation}
If $s\in (1, \infty) \setminus \mathbb  N$, then the space $B^{s,A}(\rn)$  is accordingly defined as
\begin{equation}\label{besov4}
B^{s,A}(\rn) = \big\{u \in W^{[s],A}(\rn):  |\nabla ^{[s]}u|_{B^{{\{s\}, A}}(\rn)}<\infty\big\},
\end{equation}
and is endowed with the norm
\begin{equation}\label{besov5}
\|u\|_{B^{s,A}(\rn)}  = \|u\|_{W^{[s],A}(\rn)}+ |\nabla ^{[s]}u|_{B^{\{s\}, A}(\rn)}.
\end{equation}

\begin{theorem}\label{besovequiv}
Let  $s\in (0,\infty)\setminus \mathbb N$ and let $A$ be a Young function. Then,
\begin{equation}\label{besov7}
B^{s,A}(\rn) = W^{s,A}(\rn),
\end{equation} 
 with norms equivalent up to constants depending on $n$.
\end{theorem}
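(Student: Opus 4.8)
\textbf{Proof proposal for Theorem \ref{besovequiv}.}

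The plan is to reduce to the case $s\in(0,1)$ and then establish the two-sided equivalence of the seminorms $|u|_{s,A,\rn}$ and $|u|_{B^{s,A}(\rn)}$ directly at the level of the modular functionals. The reduction is immediate: once the result is known for $s\in(0,1)$, the general case $s\in(1,\infty)\setminus\N$ follows by applying it to $\nabla^{[s]}u$ with fractional smoothness $\{s\}$, comparing \eqref{besov4}--\eqref{besov5} with \eqref{aug344higher}. So assume $s\in(0,1)$. Since both seminorms are Luxemburg-type gauges of modulars that scale homogeneously in $u$, it suffices to prove that there is a constant $c=c(n)$ such that
\begin{equation*}
\int_{\rn}\int_{\rn}A\Big(\tfrac{|u(x)-u(y)|}{c|x-y|^s}\Big)\tfrac{\dd(x,y)}{|x-y|^n}\le 1 \quad\Longrightarrow\quad \sum_{j=1}^n\int_0^\infty\int_{\rn}A\Big(\tfrac{|u(x+\varrho e_j)-u(x)|}{\varrho^s}\Big)\dx\tfrac{\dd\varrho}{\varrho}\le 1,
\end{equation*}
and the reverse implication with the roles swapped; the equivalence of norms then follows by adding $\|u\|_{L^A(\rn)}$ to both sides.

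For the direction ``Gagliardo controls Besov'', I would pass to polar coordinates in the Gagliardo double integral: writing $y=x+\varrho\omega$ with $\omega\in\mathbb S^{n-1}$ gives
\begin{equation*}
\int_{\rn}\int_{\rn}A\Big(\tfrac{|u(x)-u(y)|}{\lambda|x-y|^s}\Big)\tfrac{\dd(x,y)}{|x-y|^n}=\int_{\mathbb S^{n-1}}\int_0^\infty\int_{\rn}A\Big(\tfrac{|u(x+\varrho\omega)-u(x)|}{\lambda\varrho^s}\Big)\dx\,\tfrac{\dd\varrho}{\varrho}\,\dd\omega.
\end{equation*}
Thus the Besov modular is, up to the choice of the $n$ coordinate directions in place of an average over $\mathbb S^{n-1}$, a ``slice'' of the Gagliardo modular. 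To convert the $n$ fixed directions into all directions, I would use a rotation-and-change-of-variables argument: for a fixed $\omega$, the increment $u(x+\varrho\omega)-u(x)$ along $\omega$ can be estimated via a chain of increments along the coordinate axes, using convexity of $A$ and Jensen's inequality to distribute the factor; equivalently, one bounds $\int_{\mathbb S^{n-1}}(\cdot)\dd\omega$ from below by a constant times the sum over $j$ by a direct geometric comparison (the coordinate directions ``span'' enough of the sphere). This gives $|u|_{B^{s,A}(\rn)}\le c(n)|u|_{s,A,\rn}$.

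The reverse direction, ``Besov controls Gagliardo'', is the main obstacle and where the absence of the $\Delta_2$/$\nabla_2$ conditions bites. Here one must reconstruct the increment along an \emph{arbitrary} direction $\omega\in\mathbb S^{n-1}$ from increments along the $n$ coordinate axes, and then integrate over the sphere. The natural tool is a telescoping decomposition: join $x$ to $x+\varrho\omega$ by a path of at most $n$ axis-parallel segments, whose lengths are comparable to $\varrho$, so that $u(x+\varrho\omega)-u(x)=\sum_{k}\big(u(z_{k})-u(z_{k-1})\big)$ with each $z_k-z_{k-1}$ parallel to some $e_{j}$; then $A\big(|u(x+\varrho\omega)-u(x)|/(c n\varrho^s)\big)\le \tfrac1n\sum_k A\big(|u(z_k)-u(z_{k-1})|/(c\varrho^s)\big)$ by convexity, and one integrates in $x$, $\varrho$ and $\omega$, absorbing the resulting Jacobian and the shift of integration variables into the constant $c(n)$. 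The delicate point is that the intermediate points $z_k$ depend on $\omega$ and $\varrho$, so the change of variables $x\mapsto z_{k-1}$ and $\varrho\mapsto$ (length of the $k$-th segment) must be carried out carefully, with the measure $\dx\,\dd\varrho/\varrho$ controlled uniformly in $\omega$; also one must handle the endpoints of the $\varrho$-integration and the fact that for some $\omega$ certain segments may degenerate. Once this geometric bookkeeping is done, one obtains $|u|_{s,A,\rn}\le c(n)|u|_{B^{s,A}(\rn)}$, and combined with the previous paragraph and the reduction to $s\in(0,1)$, this proves \eqref{besov7} with norm constants depending only on $n$.
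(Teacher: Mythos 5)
Your overall structure matches the paper: the reduction of $s>1$ to $s\in(0,1)$ via $\nabla^{[s]}u$ and $\{s\}$, the modular-level formulation of the seminorm equivalence, and the telescoping decomposition of $u(x+\varrho\omega)-u(x)$ into axis-parallel increments combined with convexity of $A$ are exactly the ingredients of the paper's proof of the half
\begin{equation*}
\iint_{\R^{n}\times\R^{n}}A\Big(\frac{|u(x)-u(y)|}{|x-y|^{s}}\Big)\frac{\dd\,(x,y)}{|x-y|^n}\le \sum_{j=1}^{n}\int_{0}^{\infty}\int_{\R^{n}}A\Big(c\frac{|u(x+\varrho e_{j})-u(x)|}{\varrho^s}\Big)\dx\frac{\dd\varrho}{\varrho},
\end{equation*}
i.e.\ $|u|_{s,A,\rn}\le c\,|u|_{B^{s,A}(\rn)}$ (which, incidentally, you call the main obstacle; it is in fact the easy half, the paper's \eqref{eq:trivial}).

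The genuine gap is in the other half, $|u|_{B^{s,A}(\rn)}\le c\,|u|_{s,A,\rn}$, i.e.\ the paper's \eqref{eq:2402}. Your proposed argument for it --- bounding $\int_{\mathbb S^{n-1}}(\cdot)\,\dd\omega$ from below by a constant times the sum over the coordinate directions ``by a direct geometric comparison'' --- is not valid: the coordinate directions form a null set of the sphere, and the map $\omega\mapsto\int_0^\infty\int_{\rn}A\big(|u(x+\varrho\omega)-u(x)|/\varrho^s\big)\dx\,\tfrac{\dd\varrho}{\varrho}$ is in no way dominated at a fixed $\omega$ by its spherical average, so a fixed-direction slice cannot be read off from the Gagliardo double integral without a real argument. (The chain-of-increments remark in the same paragraph proves the opposite inequality, so it cannot fill this hole.) The paper's device here is an averaging over an intermediate point: for the direction $e_n$, writing $x=(x',x_n)$ and $Q_{\varrho/2}(x')=\bigtimes_{j<n}(x_j-\varrho/2,x_j+\varrho/2)$, monotonicity and convexity of $A$ give
\begin{align*}
A\Big(\frac{|u(x+\varrho e_{n})-u(x)|}{\varrho^s}\Big)
&\leq \dashint_{Q_{\varrho/2}(x')}A\Big(2\frac{|u(x',x_n+\varrho)-u(y',x_n+\varrho/2)|}{\varrho^s}\Big)\,\dd y'\\
&\quad +\dashint_{Q_{\varrho/2}(x')}A\Big(2\frac{|u(y',x_n+\varrho/2)-u(x',x_n)|}{\varrho^s}\Big)\,\dd y',
\end{align*}
and after integrating in $x$ and $\varrho$, Fubini together with the observation that $y'\in Q_{\varrho/2}(x')$ forces $|(x',x_n)-(y',x_n-\varrho/2)|\le c\varrho$ shows each term is controlled by $\iint A\big(2|u(x)-u(y)|/|x-y|^s\big)\,\dd(x,y)/|x-y|^n$. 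Some mechanism of this kind, converting a one-directional increment into a two-point difference with $|x-y|\approx\varrho$, is indispensable; without it your proposal does not establish $W^{s,A}(\rn)\subset B^{s,A}(\rn)$, and hence not \eqref{besov7}.
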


\bigskip
\subsection{Norms defined via oscillations}\label{sub:osc}

Given $s \in (0, \infty) \setminus \mathbb N$, we set
\begin{align*}
\osc^s u(x,r)=\inf_{\mathfrak q\in\mathcal P_{[ s]}}\dashint_{B_r(x)}\frac{|u-\mathfrak q|}{r^s}\dy,
\end{align*}
for $u \in\mathcal M(\rn)$.
Here, $B_r(x)$ stands for the ball centered at $x$ and with radius $r$, and $\mathcal P_{[s]}$ denotes the space of polynomials of degree at most $[ s]$.
With  a Young function $A$, we associate the seminorm given by
\begin{align*}
|u|_{\mathcal O^{s,A}(\R^n)}=\inf \bigg\{\lambda >0 :\int_0^1\int_{\R^n}A\Big(\frac{\osc^s u(x,r)}{\lambda}\Big)\dx\frac{\dd r}{r}\leq 1\bigg\}
\end{align*}
for $u \in\mathcal M(\rn)$.
\\
The space $\mathcal O^{s,A}(\R^n)$ is then defined as 
\begin{equation}\label{osc1}
\mathcal O^{s,A}(\R^n)= \{u\in L^A(\rn): |u|_{\mathcal O^{s,A}(\R^n)}<\infty\},
\end{equation}
and is equipped with the norm given by
\begin{equation}\label{osc2}
\|u\|_{\mathcal O^{s,A}(\R^n)}= \|u\|_{L^A(\rn)} +  |u|_{\mathcal O^{s,A}(\R^n)}
\end{equation}
for $u \in\mathcal M(\rn)$.
%
\begin{theorem}\label{thmosc}
Let $s \in (0, \infty) \setminus \mathbb N$ and let $A$ be a Young function.
Then,
\begin{equation}\label{osc3} \mathcal O^{s,A}(\rn) = W^{s,A}(\rn),
\end{equation}
 with norms equivalent up to constants depending on $n$ and $s$.
%
%
\end{theorem}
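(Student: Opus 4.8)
\textbf{Proof strategy for Theorem \ref{thmosc} (oscillation characterization).}
The plan is to prove the two-sided norm equivalence $\|u\|_{\mathcal O^{s,A}(\rn)} \approx \|u\|_{W^{s,A}(\rn)}$ by chaining it with the Besov characterization already established in Theorem \ref{besovequiv}; that is, it suffices to show $\mathcal O^{s,A}(\rn) = B^{s,A}(\rn)$ with equivalent norms. By the reduction in the definitions \eqref{besov4}--\eqref{besov5} and \eqref{osc1}--\eqref{osc2} (both spaces are built on top of $W^{[s],A}(\rn)$ by imposing a seminorm on $\nabla^{[s]}u$, and $\osc^s u$ only ``sees'' $u$ modulo polynomials of degree $[s]$), one may assume $s\in(0,1)$, so that $\mathcal P_{[s]}$ consists of constants and $\osc^s u(x,r) = r^{-s}\inf_{c\in\R}\dashint_{B_r(x)}|u-c|\,dy$. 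The whole proof then lives at the level of comparing the single modular
$\int_0^1\int_{\rn} A\big(\osc^s u(x,r)/\lambda\big)\,dx\,\frac{dr}{r}$
with the Besov modular
$\sum_{j=1}^n\int_0^\infty\int_{\rn}A\big(|u(x+\varrho e_j)-u(x)|/(\lambda\varrho^s)\big)\,dx\,\frac{d\varrho}{\varrho}$,
exploiting only the convexity and monotonicity of $A$ (so that no $\Delta_2$/$\nabla_2$ hypothesis is needed and the infinite-valued case is automatically covered), Jensen's inequality, Fubini, and scaling in $r$.

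\textbf{Oscillation $\lesssim$ difference quotients.}
First I would bound $\osc^s u(x,r)$ by an average of first-order differences. Writing $c = \dashint_{B_r(x)}u$, we have $\osc^s u(x,r)\le r^{-s}\dashint_{B_r(x)}\big|u(y)-\dashint_{B_r(x)}u(z)\,dz\big|\,dy \le r^{-s}\dashint_{B_r(x)}\dashint_{B_r(x)}|u(y)-u(z)|\,dz\,dy$. One then passes to differences along coordinate directions by interpolating through at most $n$ intermediate points (a standard telescoping over a polygonal path inside a ball of radius $\approx r$), and changes variables so that each difference has the form $|u(\,\cdot\,+\varrho e_j) - u(\,\cdot\,)|$ with $|\varrho|\lesssim r$. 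Applying Jensen's inequality to push $A$ inside the resulting averages, integrating $dx$, using Fubini, and finally integrating against $\frac{dr}{r}$ over $(0,1)$ — the outer $r$-integral, after swapping, produces a harmless logarithmic factor $\int_{|\varrho|}^{\cdot}\frac{dr}{r}$ which is absorbed by a dyadic decomposition $\varrho\approx 2^{-k}$ and the scaling $\varrho^{-s}$ — yields $\int_0^1\int_{\rn}A(\osc^s u(x,r)/(c\lambda))\,dx\frac{dr}{r}\lesssim \sum_j\int_0^\infty\int_{\rn}A(|u(x+\varrho e_j)-u(x)|/(\lambda\varrho^s))\,dx\frac{d\varrho}{\varrho}$ for a dimensional constant $c$. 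Combined with the trivial bound $\|u\|_{L^A}\le\|u\|_{L^A}$, this gives $\|u\|_{\mathcal O^{s,A}}\lesssim\|u\|_{B^{s,A}}$.

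\textbf{Difference quotients $\lesssim$ oscillation (the harder direction).}
For the converse I would estimate $|u(x+\varrho e_j)-u(x)|$ in terms of oscillations at scales comparable to $\varrho$, using a Campanato/Lebesgue-point style telescoping: write $u(x)-\dashint_{B_{2^{-k}}(x)}u$ as a telescoping sum $\sum_{i\ge k}\big(\dashint_{B_{2^{-i-1}}(x)}u - \dashint_{B_{2^{-i}}(x)}u\big)$ and bound each term by $\approx 2^{is}\osc^s u(x,2^{-i})$, and similarly at $x+\varrho e_j$ with balls of radius $\approx\varrho\approx 2^{-k}$ that contain both points; the two center-oscillation terms $\dashint_{B_{c\varrho}(x)}u$ and $\dashint_{B_{c\varrho}(x+\varrho e_j)}u$ differ by $\lesssim \varrho^s\,\osc^s u(\xi, c\varrho)$ for a nearby point $\xi$. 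This expresses $|u(x+\varrho e_j)-u(x)|/\varrho^s$ as a sum $\sum_{i\ge k} 2^{-(i-k)s}\cdot 2^{is}\osc^s u(\cdot_i, 2^{-i})$, i.e.\ a geometrically weighted sum of oscillation quantities. Applying convexity of $A$ with the weights $2^{-(i-k)s}$ (which sum to a constant) to move $A$ inside, then integrating $dx$, translating the shifted base points back by a change of variables (this is where one must be a little careful, since $\osc^s$ is evaluated at points depending on $x$ and $\varrho$, but all shifts are $\lesssim\varrho$ and can be absorbed into the ball radius at the cost of a fixed dimensional enlargement), summing the geometric series in $i$, and summing the dyadic $\varrho\approx 2^{-k}$ — again producing only a bounded geometric factor — yields $\sum_j\int_0^\infty\int_{\rn}A(|u(x+\varrho e_j)-u(x)|/(c\lambda\varrho^s))\,dx\frac{d\varrho}{\varrho}\lesssim \int_0^1\int_{\rn}A(\osc^s u(x,r)/\lambda)\,dx\frac{dr}{r} + (\text{a term controlled by }\|u\|_{L^A})$, the last term coming from the large-$\varrho$ range $\varrho\ge 1$ where one simply uses $|u(x+\varrho e_j)-u(x)|\le |u(x+\varrho e_j)|+|u(x)|$ and convexity. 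Together with $\|u\|_{L^A}$ on both sides this gives $\|u\|_{B^{s,A}}\lesssim\|u\|_{\mathcal O^{s,A}}$.

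\textbf{Main obstacle.}
The delicate point is the converse direction: controlling the Luxemburg-type (modular) seminorm rather than an $L^p$ integral means I cannot freely use duality or Hölder, and the standard Besov--Campanato argument must be rerun keeping track of convexity alone. Concretely, the bookkeeping in the telescoping estimate — ensuring every appearance of $\osc^s u$ is at a base point within distance $O(\varrho)$ of $x$ so that after the change of variables $x\mapsto x$ the oscillations are genuinely integrated against $dx\,\frac{dr}{r}$ over the admissible range $r\in(0,1)$, and that the geometric weights used in Jensen's inequality are summable independently of $k$ — is where the argument could go wrong, and where, as the introduction emphasizes, the usual reliance on density of smooth functions (unavailable for general $A$) has to be circumvented by working directly with the integral averages. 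The endpoint $r=1$ truncation and the matching $\varrho\ge1$ tail both have to be handled by the crude $L^A$ bound, which is why $\|u\|_{L^A(\rn)}$ appears as an additive term on the right-hand side of each inequality and the final equivalence is of the full norms, not the seminorms.
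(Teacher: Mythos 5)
There is a genuine gap, and it is in the very first reduction. You claim that one may assume $s\in(0,1)$ because ``both spaces are built on top of $W^{[s],A}(\rn)$ by imposing a seminorm on $\nabla^{[s]}u$.'' That is true for $B^{s,A}(\rn)$ and $W^{s,A}(\rn)$, but it is false for $\mathcal O^{s,A}(\rn)$: by \eqref{osc1}--\eqref{osc2}, membership in $\mathcal O^{s,A}(\rn)$ requires only $u\in L^A(\rn)$ together with finiteness of the oscillation modular of $u$ itself, where $\osc^s u$ involves approximation by polynomials of degree $[s]$ but no derivatives of $u$ at all. For $s>1$ the substantive content of the inclusion $\mathcal O^{s,A}(\rn)\subset W^{s,A}(\rn)$ is precisely that this derivative-free information forces $u$ to possess weak derivatives of order $[s]$ lying in $L^A(\rn)$, with quantitative control of $\|\nabla^k u\|_{L^A}$ and of the Gagliardo seminorm of $\nabla^{[s]}u$. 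Your proposal never produces these derivatives; the ``reduction to $s\in(0,1)$'' silently replaces $\osc^s u$ by $\osc^{\{s\}}(\nabla^{[s]}u)$, which is exactly what has to be proved (the easy half of that comparison, $\osc^s u\lesssim \osc^{\{s\}}(\nabla^{[s]}u)$ via the Poincar\'e inequality, gives only the inclusion $W^{s,A}\subset\mathcal O^{s,A}$). This is why the paper does not argue through $B^{s,A}$: it proves $\mathcal O^{s,A}(\rn)=F^{s,A}(\rn)$, using Lemma \ref{lem:support} with kernels $\mathfrak K=\Delta^N\mathfrak K_0$ that annihilate $\mathcal P_{[s]}$ (so that each Littlewood--Paley piece is pointwise dominated by an oscillation), the representation $u_i=\check\eta_i*u_i$ together with the discrete convolution inequality of Theorem \ref{lem:discrete} for the converse, and then Lemma \ref{lem:BsA}, which is the step that manufactures $\nabla^{[s]}u\in L^A(\rn)$ from the Littlewood--Paley data; Theorem \ref{thm:equi} then closes the loop. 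Some Fourier-analytic (or at least polynomial-approximation/Markov-inequality type) mechanism of this kind is indispensable for $s>1$, and your outline contains none.

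For $s\in(0,1)$, by contrast, your plan is essentially sound and genuinely different from the paper's: the bound of the oscillation modular by the difference-quotient modular via Jensen and Fubini works (in fact one can land directly on the Gagliardo modular, and if the exponents are tracked through the $r^{-s}$ weight no logarithmic factor appears, so that remark suggests a computation still to be done carefully), and the converse Campanato-type telescoping, with the geometric weights fed into the convexity of $A$, the translated base points absorbed by a change of variables, and the large-$\varrho$ and $r>1$ ranges handled by the $L^A$ term, is a legitimate modular adaptation of the classical argument that avoids Fourier analysis altogether. But as it stands the proposal proves (a version of) the theorem only for $s\in(0,1)$; the case $s\in(1,\infty)\setminus\mathbb N$, where the theorem upgrades purely integral information on $u$ to $[s]$ orders of weak differentiability, is missing its key idea.
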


\bigskip
\subsection{Norms defined via  a Littlewood-Paley decomposition}\label{sub:fourier}

Let $\varphi_{0}\in C_{0}^{\infty}(B_2(0))$ be such that 
$\varphi_{0}=1$ in $B_1(0)$.
 We define for $i\in\mathbb{N}$ the function $\varphi_{i} : \rn \to \mathbb R$ as
\begin{align}
\varphi_{i}(\xi)=\varphi_{0}(2^{-i}\xi)-\varphi_{0}(2^{-i+1}\xi)\quad \text{for $\xi\in\R^{n}$.} 
\end{align}
Then: 
\begin{enumerate}
\item $\mathrm{sprt}(\varphi_{0})\subset\overline{B_2(0)}$.
\item\label{item:supporitem} $\mathrm{sprt}(\varphi_{i})\subset \{\xi\in\R^{n}\colon\;2^{i-1}\leq |\xi|\leq 2^{i+1}\}$ for all $i\in\mathbb{N}$.
\item If $i,l\in\mathbb{N}$ satisfy $|i-l|\geq 2$, then $\mathrm{sprt}(\varphi_{i})\cap \mathrm{sprt}(\varphi_{l})=\emptyset$. 
\item The sequence $\{\varphi_{i}\}_{i=0}^{\infty}$ forms a partition of unity in $\rn$, namely
\begin{align}
\sum_{i=0}^{\infty}\varphi_{i}(\xi)=1\quad\text{for}\;\xi\in\R^{n}. 
\end{align}
\end{enumerate}
Consider the Fourier multiplying operator
$\varphi_i(D)$ given by 
\begin{align*}
\varphi_i(D)u=\mathfrak F^{-1}(\varphi_i\widehat u)
\end{align*}
for a tempered distribution $u$. 
Then  
\begin{align*}
u
=\sum_{i=0}^\infty\varphi_i(D)u.
\end{align*}
Given a Young function $A$ and $s\in (0, \infty) \setminus \N$, we define $F^{s,A}(\R^n)$ as the Banach space   of all tempered distributions $u$ for which the  norm
\begin{align*}
\|u\|_{F^{s,A}(\R^n)}= \inf \bigg\{\lambda>0 : \sum_{i=0}^\infty \int_{\R^n}A\Big(\frac{2^{is}|\varphi_i(D)u|}{\lambda}\Big)\dx\leq 1\bigg\}
\end{align*}
is finite.  

\begin{theorem}\label{thm:equi}
Let   $s\in (0, \infty) \setminus \mathbb N$ and let $A$ be a Young function. Then,
\begin{equation}\label{fourier1} 
F^{s,A}(\rn) = W^{s,A}(\rn),
\end{equation}
  with norms equivalent up to constants depending on $n$ and $s$.
\end{theorem}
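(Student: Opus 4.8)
The plan is to establish the equivalence $F^{s,A}(\rn) = W^{s,A}(\rn)$ by routing through the Besov characterization of Theorem~\ref{besovequiv}, so that it suffices to prove $F^{s,A}(\rn) = B^{s,A}(\rn)$ with equivalent norms. By the reductions built into the definitions \eqref{besov4}--\eqref{besov5} and the analogous lifting property of the Littlewood--Paley norm (which follows because $\varphi_i(D)$ commutes with differentiation and $\sum_i 2^{i[s]}\varphi_i(D)u$ recovers $\nabla^{[s]}u$ up to an invertible multiplier of order $0$), one can reduce to the case $s \in (0,1)$; I would carry out this reduction carefully at the start, since the non-$\Delta_2$ setting means one cannot freely pass between seminorms and must work directly with modulars and Luxemburg infima.

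For $s\in(0,1)$, the heart of the matter is the two-sided comparison between $\sum_i \int_{\rn} A(2^{is}|\varphi_i(D)u|/\lambda)\dx$ and the difference-quotient modular $\sum_{j=1}^n \int_0^\infty\int_{\rn} A(|u(x+\varrho e_j)-u(x)|/(\lambda\varrho^s))\dx\,\tfrac{d\varrho}{\varrho}$. For the direction $B^{s,A}\hookrightarrow F^{s,A}$, the standard device is that for $i\geq 1$ the block $\varphi_i(D)u$ can be written as a convolution of a first difference $\Delta_h u = u(\cdot+h)-u(\cdot)$ of $u$ at scale $|h|\sim 2^{-i}$ against a kernel with $L^1$ norm controlled uniformly in $i$ (because $\varphi_i$ vanishes at the origin, one gains a factor that converts $\widehat u$ into the symbol of a difference operator); then Jensen's inequality applied to the probability measure obtained by normalizing that kernel, together with the convexity of $A$, transfers the pointwise bound into a modular bound, and one integrates over the annulus of scales $2^{-i-1}\leq\varrho\leq 2^{-i}$. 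For the low-frequency term $\varphi_0(D)u$ one separately controls $\int_\rn A(|\varphi_0(D)u|/\lambda)\dx$ by $\|u\|_{L^A}$ via Jensen again, since $\varphi_0(D)$ is convolution with an $L^1$ kernel. The reverse direction $F^{s,A}\hookrightarrow B^{s,A}$ runs similarly: write $\Delta_{\varrho e_j}u = \sum_i \Delta_{\varrho e_j}\varphi_i(D)u$, split the sum at the index $i$ with $2^i\sim 1/\varrho$, bound the low-frequency part using $\|\Delta_{\varrho e_j}\varphi_i(D)u\|_\infty$-type estimates replaced by modular/Jensen arguments (using $|\Delta_{\varrho e_j}\varphi_i(D)u| \lesssim \varrho\, |\nabla\varphi_i(D)u|$ and a Bernstein-type gain $2^i$), and the high-frequency part using $|\Delta_{\varrho e_j}\varphi_i(D)u|\leq |\varphi_i(D)u(\cdot+\varrho e_j)| + |\varphi_i(D)u|$; in both pieces the factors $2^{\pm i s}$ vs.\ $\varrho^{-s}$ combine into a geometrically summable series, and convexity of $A$ lets one absorb the geometric weights into the Luxemburg constant.

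The main obstacle I anticipate is precisely this handling of the infinite geometric sums at the level of modulars rather than norms: in the classical $L^p$ proof one freely uses $\ell^q$-summation and Bernstein's inequality, but here one must instead exploit convexity of $A$ in the form $A(\sum_k \theta_k t_k) \leq \sum_k \theta_k A(t_k)$ for a fixed summable weight sequence $\{\theta_k\}$ with $\sum\theta_k=1$, and arrange all the scale-dependent constants so that the resulting weights are genuinely summable and independent of $u$ and of $A$. A second technical point is that $u$ is a priori only a tempered distribution on the $F$-side, so one must check $\varphi_i(D)u \in L^1_{\rm loc}$ and that the series $\sum_i\varphi_i(D)u$ converges in a sense compatible with forming the difference quotients; this is where the remark after \eqref{aug340} that a finite Gagliardo seminorm forces $u\in L^1_{\rm loc}$, together with the already-established $L^A$-membership on the $B$-side, keeps everything in a classical function space. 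Finally, the borderline case $L^A=L^\infty$ (infinite-valued $A$) must be checked not to require a separate argument — but since every estimate above is phrased through Jensen's inequality and convexity, which hold verbatim for infinite-valued Young functions, this should go through uniformly, consistently with the remarks in the introduction.
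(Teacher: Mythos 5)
Your plan for the direction $W^{s,A}\hookrightarrow F^{s,A}$ (equivalently $B^{s,A}\hookrightarrow F^{s,A}$, the analogue of the paper's inequality \eqref{apr56}) has a genuine gap. First, the representation you invoke is not available as stated when $n\geq 2$: you cannot write $\varphi_i(D)u$ as the convolution of a single directional first difference $\Delta_h u$ against a kernel with $L^1$ norm uniform in $i$, because the symbol $e^{\mathfrak i\,h\cdot\xi}-1$ vanishes on the hyperplane $\{h\cdot\xi=0\}$, which intersects the annulus $\mathrm{sprt}(\varphi_i)$, so the quotient $\varphi_i/(e^{\mathfrak i\,h\cdot\xi}-1)$ is not a bounded multiplier; one needs differences along all coordinate directions together with a partition of the annulus into regions where $|\xi_j|\gtrsim|\xi|$. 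Second, and more seriously, even with such a corrected factorization at a fixed translation $|h|\sim 2^{-i}$, the step ``one integrates over the annulus of scales $2^{-i-1}\leq\varrho\leq 2^{-i}$'' does not connect to the Besov modular: Jensen produces the modular of a difference at one \emph{fixed} $h$, whereas the integrand of \eqref{besov1} at scale $\varrho$ involves $\Delta_{\varrho e_j}u$ at the \emph{varying} scale $\varrho$, and in the Orlicz setting (no $\Delta_2$, only modulars) a single-scale quantity is not dominated by its average over a dyadic window without an argument. You must either make the factorization $\varrho$-dependent, so that for each $\varrho$ in the window the difference produced is exactly $\Delta_{\varrho e_j}u$ with $L^1$ bounds uniform in $\varrho$ and $i$ (this can be made to work and would be a genuinely different, arguably shorter, route), or do what the paper does: replace $\varphi_i$ by kernels with vanishing moments (Lemma \ref{lem:support}), pass to ball-averaged differences, and then remove the resulting supremum/average via the chain \eqref{apr145}--\eqref{apr147}, whose engine is the Hardy-type inequality for Young functions \eqref{apr140}. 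That block is the bulk of the paper's proof of \eqref{apr56} and has no counterpart in your proposal.

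The rest of your outline is broadly consistent with the paper. Your treatment of the direction $F^{s,A}\hookrightarrow B^{s,A}$ (splitting $\Delta_{\varrho e_j}\sum_i\varphi_i(D)u$ at $2^i\sim 1/\varrho$, Bernstein-type gains via $L^1$-normalized kernels, summable convexity weights) is essentially the paper's inequality \eqref{eq:2903}, proved inside Theorem \ref{thmosc} with the discrete convolution Lemma \ref{lem:discrete} and the modular convolution inequality \eqref{eq:1903'}; and the distributional/convergence issues you flag are exactly what Lemma \ref{lem:BsA} settles. Note, however, that the paper does not reduce $s>1$ by a lifting isomorphism of the Littlewood--Paley norm: the inequality ``LP blocks of $u$ controlled by LP blocks of $\nabla^{[s]}u$'' (your ``invertible multiplier of order $0$'') would itself require an annulus-multiplier construction carried out at the modular level, and the paper instead handles $s>1$ through Lemma \ref{lem:BsA} in one direction and the oscillation/Poincar\'e route (\eqref{jan70}, \eqref{jan71}, \eqref{apr156}) in the other; if you keep the lifting reduction, that step needs to be proved, not asserted.
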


\section{Convolutions in Orlicz spaces}\label{sec:conv}

This section is devoted to a convolution inequality in Orlicz spaces which is critical in the proof of  Theorem \ref{thm:main}. We begin by recalling a few definitions and basic properties from the
theory of Young functions and Orlicz spaces. For a
comprehensive treatment of this topic, we refer to
\cite{RR1,RR2}.
\par
With any Young
function $A$,  it is uniquely associated a (nontrivial) non-decreasing
left-continuous function $a:[0, \infty) \rightarrow [0, \infty]$
such that
\begin{equation}\label{B.4}
A(t) = \int _0^t a(r)\,dr \qquad {\rm for} \,\, t\geq 0.
\end{equation}
 If $A$ is any Young function and $\lambda \geq 1$, then
\begin{equation}\label{lambdaA}
\lambda A(t) \leq A(\lambda t) \quad \hbox{for $t \geq 0$.}
\end{equation}
As a consequence, if $\lambda \geq 1$, then
\begin{equation}\label{lambdaA-1}
 A^{-1}(\lambda r) \leq \lambda A^{-1}(r) \quad \hbox{for $r \geq 0$.}
\end{equation}
\par\noindent
A Young function $A$ is said to dominate another Young function $B$
[near infinity]\, [near zero] if there exists a positive constant  $C$ such that
\begin{equation}\label{B.5bis}
B(t)\leq A(c t) \qquad \textrm{for \,\,\,$t\geq 0$\,\, [$t\geq t_0$
\,\, for some $t_0>0$]\,\, [$0\leq t\leq t_0$
\,\, for some $t_0>0$]\,.}
\end{equation}
The functions $A$ and $B$ are called equivalent [near infinity] \, [near zero] if
they dominate each other [near infinity]\, [near zero]. We shall write $A \approx
B$ to denote this equivalence.
 \\  The notion of domination between Young functions enables one to characterize embeddings between Orlicz spaces. 
 Let $A$ and $B$ be Young functions. Then,
\begin{equation}\label{B.6}
L^A(\rn)\to L^B(\rn),
\end{equation}
if and only if $A$ dominates $B$ globally. The norm
of the embedding \eqref{B.6} depends on the
 constant $c$ appearing in
\eqref{B.5bis}.
\\
The Young conjugate $\widetilde{A}$ of a Young function $A$ is the Young function
defined by
$$\widetilde{A}(t) = \sup \{rt - A(r):\,r\geq 0\} \qquad {\rm for}\qquad  t\geq 0\,.$$
Note the representation formula
$$\widetilde A (t) = \int _0^t a^{-1}(r)\,dr \qquad {\rm for} \,\, t\geq
0,$$ where $a^{-1}$ denotes the (generalized) left-continuous
inverse of $a$. 
One has that
\begin{equation}\label{AAtilde}
r \leq A^{-1}(r)\widetilde A^{-1}(r) \leq 2 r \quad \hbox{for $r
\geq 0$,}
\end{equation}
where $A^{-1}$ denotes the (generalized) right-continuous inverse of
$A$.
Moreover,
\begin{equation}\label{B.7'}
\widetilde{\!\widetilde A\,}=A\,
\end{equation}
for any Young function $A$.
\\
The    Young conjugate enters 
a H\"older type inequality in Orlicz spaces.  
One has that
\begin{equation}\label{B.5}
\|v\|_{L^{\widetilde{A}}(\rn)}\leq \sup _{u \in L^A(\rn)}
\frac {\int _{\rn} u(x) v(x)\, dx}{\|u\|_{L^A(\rn)}} \leq 2
\|v\|_{L^{\widetilde{A}}(\rn)}
\end{equation}
for every $v\in L^{\widetilde{A}}(\rn)$.
\\ In our proofs we shall also   make use of Orlicz sequence spaces. The  Orlicz sequence space built upon a Young function $A$ is defined as the space $\ell^A(\mathbb Z)$  of all sequences $\mathfrak a= \{a_i\}$ for which the norm
$$\|\mathfrak a\|_{\ell^A(\mathbb Z)} = \inf\bigg\{\lambda >0: \sum_{i\in \mathbb Z}A\Big(\frac {|a_i|}\lambda\Big)\leq 1\bigg\}$$
is finite.

 A basic convolution inequality, involving a kernel that merely belongs to $L^1(\rn)$, is stated in the next theorem.

\begin{theorem}\label{lem:cont0}
Let $A$ be a Young function. Then,
\begin{align}\label{eq:1903}
\|u*v\|_{L^A(\R^n)}\leq \|v\|_{L^1(\R^n)}\|u\|_{L^A(\R^n)}
\end{align}
for every $u\in L^A(\R^n)$ and $v\in L^{1}(\R^n)$.
 Moreover,
\begin{align}\label{eq:1903'}
\int_{\R^n}A(|u*v|)\dx\leq \int_{\R^n}A\Big(\|v\|_{L^{1}(\R^n)}|u|\Big)\dx.
\end{align}
\end{theorem}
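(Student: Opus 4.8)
The plan is to exploit the convexity of $A$ together with Jensen's inequality, with the measure $\tfrac{|v(y)|\,dy}{\|v\|_{L^1(\rn)}}$ playing the role of a probability measure. First I would dispose of the trivial cases: if $v=0$ a.e. both sides vanish, and if $\|u\|_{L^A(\rn)}=\infty$ there is nothing to prove, so assume $0<\|v\|_{L^1(\rn)}<\infty$ and $u\in L^A(\rn)$. The key observation is that for a.e.\ $x$,
\[
|(u*v)(x)| \leq \int_{\rn} |u(x-y)|\,|v(y)|\,dy = \|v\|_{L^1(\rn)}\int_{\rn} |u(x-y)|\,\frac{|v(y)|\,dy}{\|v\|_{L^1(\rn)}}.
\]
Applying the (left-continuous, nondecreasing, convex) function $A$ to both sides and invoking Jensen's inequality with respect to the probability measure $d\mu(y)=\tfrac{|v(y)|}{\|v\|_{L^1(\rn)}}\,dy$ on $\rn$ gives
\[
A\big(|(u*v)(x)|\big) \leq A\Big(\|v\|_{L^1(\rn)}\int_{\rn}|u(x-y)|\,d\mu(y)\Big) \leq \int_{\rn} A\big(\|v\|_{L^1(\rn)}\,|u(x-y)|\big)\,d\mu(y).
\]
A small technical point worth spelling out: $A$ may be infinite-valued, but Jensen's inequality for convex functions taking values in $(-\infty,+\infty]$ remains valid (with the usual conventions), and if the right-hand side is $+\infty$ the inequality holds trivially, so there is no real obstacle here.

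Next I would integrate this pointwise bound over $x\in\rn$, use Tonelli's theorem to interchange the order of integration (everything is nonnegative and measurable), and change variables $x\mapsto x+y$ in the inner integral:
\[
\int_{\rn} A\big(|(u*v)(x)|\big)\,dx \leq \int_{\rn}\bigg(\int_{\rn} A\big(\|v\|_{L^1(\rn)}\,|u(x-y)|\big)\,dx\bigg)\,d\mu(y) = \int_{\rn} A\big(\|v\|_{L^1(\rn)}\,|u(x)|\big)\,dx,
\]
since $\mu$ is a probability measure and the inner integral is independent of $y$ after translation invariance of Lebesgue measure. This is exactly \eqref{eq:1903'}.

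Finally, \eqref{eq:1903} follows from \eqref{eq:1903'} by a standard homogeneity/scaling argument with Luxemburg norms. If $\|u\|_{L^A(\rn)}=0$ then $u=0$ a.e.\ and the claim is trivial; otherwise set $\lambda=\|u\|_{L^A(\rn)}$, so that $\int_{\rn} A(|u|/\lambda)\,dx\leq 1$. Replacing $u$ by $u/\lambda$ in \eqref{eq:1903'} (the convolution is linear, so $(u/\lambda)*v = (u*v)/\lambda$) and using $\tfrac{u*v}{\lambda\|v\|_{L^1(\rn)}} = \tfrac{(u/\lambda)*v}{\|v\|_{L^1(\rn)}}$ together with \eqref{eq:1903'} applied to $u/\lambda$ — and, when $\|v\|_{L^1(\rn)}\geq 1$, also the elementary inequality \eqref{lambdaA} to absorb the factor $\|v\|_{L^1(\rn)}$ inside $A$ — one obtains
\[
\int_{\rn} A\Big(\frac{|(u*v)(x)|}{\lambda\,\max\{1,\|v\|_{L^1(\rn)}\}}\Big)\,dx \leq \int_{\rn} A\Big(\frac{|u(x)|}{\lambda}\Big)\,dx \leq 1,
\]
whence $\|u*v\|_{L^A(\rn)}\leq \max\{1,\|v\|_{L^1(\rn)}\}\,\|u\|_{L^A(\rn)}$. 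To get the clean constant $\|v\|_{L^1(\rn)}$ claimed in \eqref{eq:1903} one argues slightly more carefully: directly from \eqref{eq:1903'} with $u$ replaced by $u/\lambda$ we have $\int_{\rn} A\big(\|v\|_{L^1(\rn)}|u|/\lambda\big)\,dx \geq \int_{\rn}A(|(u*v)|/\lambda)\,dx$ is the wrong direction, so instead observe that \eqref{eq:1903'} applied to $\widetilde u := u/(\lambda\|v\|_{L^1(\rn)})$ when $\|v\|_{L^1(\rn)}>0$ gives $\int A(|u*v|/(\lambda\|v\|_{L^1(\rn)}))\,dx = \int A(|\widetilde u * v|\cdot\|v\|_{L^1}/ \dots)$ — cleaner is to note \eqref{eq:1903'} says the modular of $u*v$ is controlled by the modular of $\|v\|_{L^1(\rn)}u$, i.e.\ $\varrho_A(u*v)\leq \varrho_A(\|v\|_{L^1(\rn)}u)$, and by the definition of the Luxemburg norm this immediately yields $\|u*v\|_{L^A(\rn)}\leq \|\,\|v\|_{L^1(\rn)}u\,\|_{L^A(\rn)} = \|v\|_{L^1(\rn)}\|u\|_{L^A(\rn)}$. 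The main (and only mildly delicate) point of the whole argument is the justification of Jensen's inequality for a possibly infinite-valued convex $A$; everything else is Tonelli, translation invariance, and the definition of the Luxemburg norm.
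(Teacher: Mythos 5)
Your proof is correct and follows essentially the same route as the paper: the paper proves the discrete analogue (Theorem \ref{lem:discrete}) via Jensen's inequality with the normalized kernel as weights, Fubini, and then rescaling by the Luxemburg norm, and explicitly states that Theorem \ref{lem:cont0} is "just a continuous version" of that argument — which is precisely what you wrote. The only stylistic remark is that the final paragraph could be streamlined: since \eqref{eq:1903'} holds for every $u\in L^A(\rn)$ and convolution is linear, applying it to $u/\lambda$ with $\lambda=\|v\|_{L^1(\rn)}\|u\|_{L^A(\rn)}$ gives the norm inequality at once, without the intermediate detour through $\max\{1,\|v\|_{L^1(\rn)}\}$.
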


A discrete counterpart of Theorem \ref{lem:cont0} for Orlicz sequence spaces is the subject of the next result. 

\begin{theorem}\label{lem:discrete} Let $A$ be a Young function. Then,
\begin{equation}\label{dec6}
\|\mathfrak a * \mathfrak b\|_{\ell^A(\mathbb Z)} \leq \|\mathfrak a\|_{\ell^1(\mathbb Z)} \|\mathfrak b\|_{\ell^A(\mathbb Z)}
\end{equation}
for all sequences $\mathfrak a \in \ell^1(\mathbb Z)$ and $\mathfrak b \in \ell^A(\mathbb Z)$. 
Moreover,
\begin{align}\label{dec5}
\sum_{l\in\mathbb Z} A\bigg(\Big|\sum_{{  i\in \mathbb Z}}a_{i-l}b_l\Big|\bigg)\leq \sum_{l\in\mathbb Z} A\big(\|\mathfrak a\|_{\ell^1(\mathbb Z)}|b_l|\big),
\end{align}
where $\mathfrak a= \{a_i\}$ and $\mathfrak b= \{b_i\}$. 
\end{theorem}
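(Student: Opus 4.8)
The plan is to prove the modular inequality \eqref{dec5} first, by an application of Jensen's inequality that parallels the proof of Theorem \ref{lem:cont0}, and then to deduce the norm inequality \eqref{dec6} from it by a normalisation argument. As a preliminary reduction I would note that the quantity inside $A$ on the left of \eqref{dec5} is dominated by $\sum_i|a_{i-l}|\,|b_i|$, that $A$ is non-decreasing, and that the norms $\|\cdot\|_{\ell^1(\mathbb Z)}$ and $\|\cdot\|_{\ell^A(\mathbb Z)}$ are unaffected by replacing the entries of a sequence with their absolute values; hence it suffices to treat the case $a_i,b_i\ge 0$ for all $i$.

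For \eqref{dec5}, I would set $S=\|\mathfrak a\|_{\ell^1(\mathbb Z)}=\sum_i a_i$ and assume $S\in(0,\infty)$ together with the finiteness of the right-hand side of \eqref{dec5}, the complementary cases being trivial; the latter forces $\sup_i A(Sb_i)<\infty$, hence $\mathfrak b\in\ell^\infty(\mathbb Z)$ because $A(t)\to\infty$, so that all series $\sum_i a_{i-l}b_i$ converge absolutely. For each fixed $l$ the weights $\{a_{i-l}/S\}_i$ constitute a probability distribution on $\mathbb Z$, since $\sum_i a_{i-l}=S$. Then I would write $\sum_i a_{i-l}b_i=\sum_i\frac{a_{i-l}}{S}(Sb_i)$ and apply Jensen's inequality for the convex, $[0,\infty]$-valued function $A$, obtaining
\[
A\Big(\sum_i a_{i-l}b_i\Big)\le\sum_i\frac{a_{i-l}}{S}A(Sb_i)\qquad\text{for every }l,
\]
and finally sum over $l$, interchange the two summations (legitimate by Tonelli's theorem, all terms being nonnegative), and use $\sum_l a_{i-l}=S$ to arrive at $\sum_l A\big(\sum_i a_{i-l}b_i\big)\le\sum_i A(Sb_i)$, which is \eqref{dec5}.

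To obtain \eqref{dec6}, I would assume $\|\mathfrak a\|_{\ell^1(\mathbb Z)}$ and $\|\mathfrak b\|_{\ell^A(\mathbb Z)}$ both finite and positive, set $\mu=\|\mathfrak a\|_{\ell^1(\mathbb Z)}\|\mathfrak b\|_{\ell^A(\mathbb Z)}$, and apply \eqref{dec5} with $\mathfrak b$ replaced by $\mathfrak b/\mu$. Since $\|\mathfrak a\|_{\ell^1(\mathbb Z)}|b_i|/\mu=|b_i|/\|\mathfrak b\|_{\ell^A(\mathbb Z)}$, the right-hand side of \eqref{dec5} then equals $\sum_i A\big(|b_i|/\|\mathfrak b\|_{\ell^A(\mathbb Z)}\big)$, which is at most $1$; this is the sequence-space version of the fact that the $A$-modular of $\mathfrak b/\|\mathfrak b\|_{\ell^A(\mathbb Z)}$ does not exceed $1$, and it follows from the definition of the Luxemburg norm together with the left-continuity of $A$ and the monotone convergence theorem for series. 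Consequently $\sum_l A\big(|(\mathfrak a*\mathfrak b)_l|/\mu\big)\le 1$---up to a harmless reflection of one of the two sequences, which alters neither the $\ell^1(\mathbb Z)$- nor the $\ell^A(\mathbb Z)$-norm---and therefore $\|\mathfrak a*\mathfrak b\|_{\ell^A(\mathbb Z)}\le\mu$, which is \eqref{dec6}.

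I do not expect a genuine obstacle. The steps requiring some care---precisely because no $\Delta_2$- or $\nabla_2$-type condition is imposed and $A$ may attain the value $\infty$---are the validity of Jensen's inequality and of the interchange of summations for an $[0,\infty]$-valued $A$, and the implication $\|\mathfrak b\|_{\ell^A(\mathbb Z)}\le 1\Rightarrow\sum_i A(|b_i|)\le 1$, which is where the left-continuity of $A$ is really used. A possible alternative would be to deduce Theorem \ref{lem:discrete} from Theorem \ref{lem:cont0} by attaching to $\mathfrak a$ and $\mathfrak b$ suitable step functions on $\mathbb R$ and controlling the continuous convolution by an approximation procedure; I would expect this route to be more cumbersome than the direct argument above.
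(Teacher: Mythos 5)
Your proposal is correct and follows essentially the same route as the paper: the modular inequality \eqref{dec5} is obtained via the discrete Jensen inequality with the probability weights $|a_{i-l}|/\|\mathfrak a\|_{\ell^1(\mathbb Z)}$ followed by an interchange of summations, and \eqref{dec6} then follows by normalising and invoking the definition of the Luxemburg norm. Your reading of the inner sum as the genuine convolution (the displayed $\sum_i a_{i-l}b_l$ is an indexing slip in the statement) is the intended one, and your added care about Jensen and the modular bound for a possibly infinite-valued, merely left-continuous $A$ only makes the argument more complete than the paper's brief version.
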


We limit ourselves to proving Theorem \ref{lem:discrete}, the proof of Theorem
\ref{lem:cont0}  being just a continuous version of the former.

\begin{proof}[Proof of Theorem \ref{lem:discrete}] Inequality \eqref{dec5} is a consequence of the following chain:
\begin{align}\label{dec6'}
\sum_{l\in\mathbb Z} A\bigg(\Big|\sum_{{  i\in \mathbb Z}}a_{i-l}b_l\Big|\bigg)  &\leq 
\sum_{l\in\mathbb Z} A\bigg( \sum_{{  i\in \mathbb Z}}|a_{i-l}||b_l|\bigg) 
 =  \sum_{l\in\mathbb Z} A\Bigg(\frac{ \sum_{{  i\in \mathbb Z}}|a_{i-l}| \|\mathfrak a\|_{\ell^1(\mathbb Z)}  |b_l|}{ \|\mathfrak a\|_{\ell^1(\mathbb Z)} }\Bigg) 
\\\nonumber  & \leq   \sum_{l\in\mathbb Z}   \sum_{i\in\mathbb Z} A\big( \|\mathfrak a\|_{\ell^1(\mathbb Z)}|b_l|\big)\frac{|a_{i-l}| }{ \|\mathfrak a\|_{\ell^1(\mathbb Z)} }
 =\sum_{l\in\mathbb Z}  A\big( \|\mathfrak a\|_{\ell^1(\mathbb Z)}|b_l|\big) \sum_{i\in\mathbb Z} \frac{|a_{i-l}| }{ \|\mathfrak a\|_{\ell^1(\mathbb Z)} }\nonumber
\\  & = \sum_{l\in\mathbb Z}  A\big( \|\mathfrak a\|_{\ell^1(\mathbb Z)}|b_l|\big).
\nonumber
\end{align}
Note that the inequality holds thanks to a discrete version of Jensen's inequality.
\\ By the definition of Luxemburg norms,  inequality \eqref{dec6} can be deduced via an application of inequality \eqref{dec5} with $ \{a_i\}$ and $ \{b_i\}$  replaced by $\{a_i/\|\mathfrak a\|_{\ell^1(\mathbb Z)}\}$ and $\{b_i/ \|\mathfrak b\|_{\ell^A(\mathbb Z)}\}$.
\end{proof}

Theorem \ref{lem:cont} below provides us with an inequality for convolution operators whose  kernel belongs to a Lebesgue space different from $L^1(\rn)$.
Definition \eqref{dec2} of the Young function $A_{\frac n{s-r}}$ defining the fractional Orlicz-Sobolev target space in the embedding of Theorem \ref{lem:cont0} is dictated by this convolution inequality. The function $A_{\frac n{s-r}}$ is associated with
the sharp target Orlicz space for the convolution operator with a domain space $L^A(\rn)$ and a kernel in $L^{\frac n{n-s+r}}(\rn)$. 

For simplicity of notation, with a shift of exponents, we state the relevant convolution inequality for kernels in $L^{\frac n{n-s}}(\rn)$. This entails the use of a Young function defined as in \eqref{dec2}, but with $s-r$ replaced just by $s$. We reproduce an explicit definition of the resultant function $A_{\frac ns}$ for the reader's convenience.

Let   $s\in(0,n)$ and let $A$ be a a Young function such that  
\begin{align}\label{eq:A2}
\int_0 \Big(\frac{t}{A(t)}\Big)^{\frac{s}{n-s}}\dt<\infty.
\end{align}
We define the Young function
\begin{align}\label{eq:Ans}
A_{\frac{n}{s}}(t)=A(H^{-1}(t))\quad \text{for $t\geq0$,}
\end{align}
where
\begin{align*}
H(t)=\bigg(\int_0^t\Big(\frac{\tau}{A(\tau)}\Big)^\frac{s}{n-s}\,\mathrm{d}\tau\bigg)^{\frac{n-s}{n}} \quad \text{for $t\geq0$,}
\end{align*}
and $H^{-1}$ stands for the generalized left-continuous inverse of $H$. Note that this is the classical inverse if 
\begin{align}\label{eq:A1}
\int^\infty \Big(\frac{t}{A(t)}\Big)^{\frac{s}{n-s}}\dt=\infty.
\end{align}
On the other hand, if this condition is not fulfilled, namely if $A$ grows so fast near infinity that
\begin{align}\label{eq:A3}
\int^\infty \Big(\frac{t}{A(t)}\Big)^{\frac{s}{n-s}}\dt<\infty,
\end{align}
then $H^{-1}(t) = \infty$ if $t> t_\infty$, where
\begin{equation}\label{tinfinity}
t_\infty = \bigg(\int_0^\infty \Big(\frac{t}{A(t)}\Big)^{\frac{s}{n-s}}\dt\bigg)^{\frac{n-s}{n}}.
\end{equation}
In this case, equation \eqref{eq:Ans} has also  to be interpreted as 
\begin{align}\label{eq:Ansinf}
A_{\frac{n}{s}}(t)=\infty \quad \text{for $t >t_\infty$.}
\end{align}

\begin{theorem}\label{lem:cont}
Let  $s\in(0,n)$. Assume that $A$ is  a Young function satisfying condition \eqref{eq:A2}  and let  $A_{\frac{n}{s}}$ be the Young function defined by  \eqref{eq:Ans}. Then, there exists a constant $c=c(n,s)$ such that
\begin{align}\label{eq:1903a}
\|u*v\|_{L^{A_{\frac{n}{s}}}(\R^n)}\leq\,c\,\|v\|_{L^{\frac{n}{n-s}}(\R^n)}\|u\|_{L^A(\R^n)}
\end{align}
for   $u\in L^A(\R^n)$ and $v\in L^{\frac{n}{n-s}}(\R^n)$.
\\ Moreover,
\begin{align}\label{conv5}
\int_{\R^n}A_{\frac{n}{s}}\Bigg(\frac{|u*v|}{c \|v\|_{L^{\frac{n}{n-s}}(\R^n)} (\int_{\R^n}A(|u|)\dy)^{s/n}}
\Bigg)\dx\leq \int_{\R^n}A(|u|)\dx.
\end{align}
\end{theorem}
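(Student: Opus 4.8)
The plan is to prove the modular inequality \eqref{conv5} first, and to deduce the norm inequality \eqref{eq:1903a} from it at the end. Write $q=\frac{n}{n-s}$, so that $\frac sn-1=-\frac1q$ and $q\in(1,\infty)$. To prove \eqref{conv5} it suffices, by its scale invariance in $v$ and by applying it with $A$ replaced by the Young function $A/M$ (where $M=\int_{\R^n}A(|u|)\dx\in(0,\infty)$, the cases $M\in\{0,\infty\}$ being trivial), using the identity $(A/M)_{\frac ns}(t)=M^{-1}A_{\frac ns}(M^{-s/n}t)$ — which follows from \eqref{eq:Ans} exactly as the analogous identity in Remark \ref{integralform} — to establish the normalized statement: there is $c=c(n,s)$ such that
\[
\int_{\R^n}A_{\frac ns}\Big(\frac{|u*v|}{c}\Big)\dx\le1\qquad\text{whenever }\int_{\R^n}A(|u|)\dx\le1\text{ and }\|v\|_{L^{q}(\R^n)}\le1 .
\]

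To pass to one dimension I would invoke O'Neil's rearrangement inequality for convolutions, $(u*v)^*(t)\le(u*v)^{**}(t)\le t\,u^{**}(t)\,v^{**}(t)+\int_t^{\infty}u^*(\tau)v^*(\tau)\,d\tau$ for $t>0$. From $\|v\|_{L^q}\le1$ and H\"older's inequality, $\int_0^t v^*(\tau)\,d\tau\le t^{1/q'}=t^{s/n}$, hence $v^{**}(t)\le t^{s/n-1}$, and since $v^*$ is non-increasing also $v^*(\tau)\le v^{**}(\tau)\le\tau^{s/n-1}$. Using the last bound in the \emph{second} term produces the one-dimensional tail operator $Pf(t):=\int_t^{\infty}f(\tau)\tau^{s/n-1}\,d\tau$ applied to $f:=u^*$; in the \emph{first} term the factor $v^{**}(t)$ must be \emph{retained} — replacing it there by $t^{s/n-1}$ loses too much, as one sees already when $v$ is the indicator of a ball — and that local term is handled separately. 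Since $\int_{\R^n}A_{\frac ns}(|u*v|/c)\dx=\int_0^{\infty}A_{\frac ns}\big((u*v)^*(t)/c\big)\,dt$, $\int_0^{\infty}A(f(t))\,dt=\int_{\R^n}A(|u|)\dx\le1$, and $A_{\frac ns}$ is convex, the task splits into: (a) the sharp one-dimensional Orlicz--Hardy inequality
\[
\int_0^{\infty}A_{\frac ns}\Big(\frac{Pf(t)}{c}\Big)\,dt\le\int_0^{\infty}A(f(t))\,dt\qquad\text{for non-increasing }f\ge0\text{ with }\textstyle\int_0^{\infty}A(f)\le1 ,
\]
and (b) the bound $\int_0^{\infty}A_{\frac ns}\big(c^{-1}t\,u^{**}(t)v^{**}(t)\big)\,dt\le1$. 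Estimate (b) is the easier one: writing $t\,u^{**}(t)v^{**}(t)=u^{**}(t)\int_0^t v^*(\tau)\,d\tau$ and using, besides $\int_0^t v^*(\tau)\,d\tau\le t^{s/n}$, the fact that $q>1$ (so that $\tau^{-q}$ is integrable at infinity) together with the basic convolution inequality of Theorem \ref{lem:cont0} applied to the superlevel truncation of $v$, one obtains a quantity controlled by a constant depending only on $n$ and $s$.

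The core of the proof is the one-dimensional inequality (a): it asserts precisely that the Hardy operator $P$ maps the modular unit ball of $A$ into a bounded multiple of the modular unit ball of $A_{\frac ns}=A\circ H^{-1}$, which is exactly what the definition of $H$ preceding Theorem \ref{lem:cont} is designed to guarantee. I would prove it by the change of variables induced by $H$: use the elementary bound $f(\tau)\le A^{-1}(1/\tau)$ (from the monotonicity of $f$ and $\int_0^{\infty}A(f)\le1$) to replace $Pf$ by an explicit function of $t$, then write $A_{\frac ns}=A\circ H^{-1}$ and substitute $y=H^{-1}(\cdot)$, and finally exploit the identity $H'(y)=\frac{n-s}{n}\big(\frac{y}{A(y)H(y)}\big)^{s/(n-s)}$, obtained by differentiating the definition of $H$; this reduces (a) to a one-variable inequality relating $H$ and $A^{-1}$ that can be verified directly and uniformly in $A$. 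The degenerate case $\lim_{t\to\infty}H(t)<\infty$, in which $A_{\frac ns}$ is infinite-valued beyond $t_\infty$ (see \eqref{eq:Ansinf}), must be treated through the generalized left-continuous inverse $H^{-1}$: one checks that there the argument of $A_{\frac ns}$ never exceeds $c\,t_\infty$, so that no mass is produced.

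Finally, \eqref{eq:1903a} is recovered from \eqref{conv5} in the standard way: if $\|u\|_{L^A(\R^n)}=\lambda>0$ then $\int_{\R^n}A(|u|/\lambda)\dx\le1$, so \eqref{conv5} applied to $u/\lambda$ yields $\|u*v\|_{L^{A_{\frac ns}}(\R^n)}\le c\,\lambda\,\|v\|_{L^{q}(\R^n)}M^{s/n}\le c\,\|u\|_{L^A(\R^n)}\|v\|_{L^{q}(\R^n)}$, since $M\le1$. The main obstacle is step (a): obtaining the sharp target $A_{\frac ns}$ in a one-dimensional Orlicz--Hardy inequality with a constant depending only on $n$ and $s$. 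The required uniformity over \emph{all} Young functions $A$ — neither $\Delta_2$ nor $\nabla_2$ is assumed — rules out soft functional-analytic arguments and forces the explicit change-of-variables estimate; accommodating the possibly infinite-valued $A_{\frac ns}$, and fitting the local O'Neil term of step (b) without an $A$-dependent loss, are the remaining delicate points.
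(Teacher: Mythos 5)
Your architecture (O'Neil's rearrangement inequality, reduction to a one-dimensional Hardy-type modular estimate for the tail operator $P$, separate treatment of the local term $t\,u^{**}v^{**}$, and the final scaling step to pass from \eqref{conv5} to \eqref{eq:1903a}) is sensible, but the central step (a) fails as you propose to carry it out. Replacing $f(\tau)$ by $A^{-1}(1/\tau)$ discards the global information $\int_0^\infty A(f)\le 1$, keeping only its pointwise consequence $A(f(\tau))\le 1/\tau$, and the resulting ``explicit'' inequality is false. Test it with $A(t)=t^p$, $1\le p<\frac ns$: then $A^{-1}(1/\tau)=\tau^{-1/p}$, so $\Phi(t):=\int_t^\infty A^{-1}(1/\tau)\,\tau^{\frac sn-1}\,\mathrm{d}\tau=c_{n,s,p}\,t^{\frac sn-\frac1p}$, while $A_{\frac ns}(t)\approx t^{\frac{np}{n-sp}}$; hence $A_{\frac ns}\big(\Phi(t)/c\big)\approx c'\,t^{-1}$ and $\int_0^\infty A_{\frac ns}\big(\Phi(t)/c\big)\,\mathrm{d}t=\infty$ for every $c>0$. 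This is not an artifact of the power case: by the very construction of $H$ and $A_{\frac ns}$ in \eqref{eq:Ans}, the substituted majorant is of the borderline size $\approx A_{\frac ns}^{-1}(1/t)$, so the divergence (logarithmic at both endpoints) is structural, exactly as one cannot prove the classical Hardy inequality by inserting the extremal profile $\tau^{-1/p}$. A correct proof of the sharp modular Hardy inequality must use $\int_0^\infty A(f)$ itself in the estimate (e.g., via a Young-inequality splitting calibrated by the modular), not a pointwise extremal bound; your step (b) for the local term is likewise only a sketch and would need a genuine modular argument.

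For comparison, the paper avoids reproving this Hardy-type estimate altogether: it establishes only the two endpoint bounds $T_v:L^1(\R^n)\to L^{\frac n{n-s}}(\R^n)$ (from Theorem \ref{lem:cont0}) and $T_v:L^{\frac ns,1}(\R^n)\to L^\infty(\R^n)$ (from O'Neil's inequality and $t^{\frac{n-s}n}v^{**}(t)\le\|v\|_{L^{\frac n{n-s}}(\R^n)}$), each with norm at most $\|v\|_{L^{\frac n{n-s}}(\R^n)}$, and then invokes the interpolation theorem of \cite[Theorem 4]{Cianchi_CPDE}, which contains precisely the sharp modular inequality you are attempting to rederive; the degenerate case \eqref{eq:A3} is handled by an additional $L^\infty$ bound guaranteeing that $H^{-1}$ is only evaluated on $[0,t_\infty)$. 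So either cite that interpolation result, as the paper does, or supply a correct self-contained proof of the modular Hardy inequality; as written, your argument has a genuine gap at its core.
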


A proof of Theorem \eqref{lem:cont} makes use of rearrangements of functions and of norms in Lorentz 
spaces. Recall that the decreasing rearrangement of a function $u \in \mathcal M(\rn)$ is the unique non-increasing right-continuous function $u^*: [0, \infty) \to [0, \infty]$ equimeasurable with $u$. Namely,
$$u^*(t)= \inf\{t\geq 0: |\{|u|>\tau\}|\leq t\} \quad \text{for $t \geq 0$.}$$
The function $u^{**}:  (0, \infty) \to [0, \infty]$,  defined as $u^{**}(t)= \frac 1t \int_0^tu^*(r)\, dr$ for $t>0$,  is also non-increasing.
\\ Lebesgue norms and, more generally, Luxemburg norms are rearrangement-invariant, in the sense that
$$\|u\|_{L^A(\rn)} = \|u^*\|_{L^A(0,\infty)}$$
for  $u \in \mathcal M(\rn)$.   Lorentz norms are defined in terms of rearrangements, and hence trivially enjoy the same property. In particular, the norm in the Lorentz space $L^{p,1}(\rn)$, with $p>1$, is given by
$$\|u\|_{L^{p,1}(\rn)} = \int_0^\infty u^{*}(t)t^{-1+\frac 1p}\, dt$$
for  $u \in \mathcal M(\rn)$.

\begin{proof}[Proof of  Theorem  \ref{lem:cont}] Let $v\in L^{\frac{n}{n-s}}(\R^n)$. By inequality \eqref{eq:1903}, with $L^A(\rn)= L^{\frac{n}{n-s}}(\rn)$, one has that
\begin{equation}\label{conv3}
\|u*v\|_{L^{\frac{n}{n-s}}(\R^n)} \leq  \|v\|_{L^{\frac{n}{n-s}}(\rn) }\|u\|_{{L^1}(\R^n)}
\end{equation}
if $u \in L^1(\R^n)$. Moreover, 
an application of H\"older's inequality and the rearrangement-invariance of the norm in $L^{\frac{n}{n-s}}(\R^n)$ imply that
\begin{equation}\label{conv2}
t^{\frac{n-s} n} v^{**}(t) \leq  \|v\|_{L^{\frac{n}{n-s}}(\R^n)} \quad \text{for $t\geq 0$.}
\end{equation}
Thus, by O'Neil's rearrangement inequality for convolutions (see \cite{oneil}), 
\begin{equation}\label{conv1}
(u*v)^{**}(t) \leq \|v\|_{L^{\frac{n}{n-s}}(\R^n)}\bigg( t^{\frac{s-n} n}\int_0^t u^* (r) \dr + \int_t^\infty u^* (r) r^{\frac{s-n} n} \dr\bigg) \quad \text{for $t>0$.}
\end{equation}
Taking the limit in inequality \eqref{conv1} as $t\to 0^+$  tells us that
\begin{align}\label{conv6}
 \|u*v\|_{L^{\infty}(\R^n)}   = \lim_{t\to 0^+} (u*v)^{**}(t)  &\leq   \|v\|_{L^{\frac{n}{n-s}}(\R^n)} \int_0^\infty u^* (r) r^{\frac{s-n} n} \dr \\ \nonumber & =  \|v\|_{L^{\frac{n}{n-s}}(\R^n)} \|u\|_{L^{\frac ns, 1}(\R^n)}
\end{align}
if $u \in L^{\frac ns, 1}(\R^n)$. 
 Notice that, in deriving  equation \eqref{conv6}, we have made use of the fact that
$$\lim_{t\to 0^+} t^{\frac{s-n} n}\int_0^t u^* (r) \dr \leq \lim_{t\to 0^+}  \int_0^t u^* (r)r^{\frac{s-n} n} \dr =0.$$
Owing to inequalities \eqref{conv3} and \eqref{conv6}, for each fixed function $v\in L^{\frac{n}{n-s}}(\R^n)$, the linear operator $T_v$, given by
$$T_v(u) = u*v,$$
satisfies the endpoint boundedness properties:
\begin{equation}\label{bound}
T_v : L^1(\R^n) \to  L^{\frac{n}{n-s}}(\R^n)\quad \text{and} \quad T_v: L^{\frac ns, 1}(\R^n) \to L^\infty (\R^n),
\end{equation}
 with norms not exceeding $\|v\|_{L^{\frac{n}{n-s}}}$.
\\ Assume first that condition \eqref{eq:A1} is fulfilled. Then, thanks to properties \eqref{bound} of the operator $T_v$, inequality \eqref{conv5} follows from the interpolation result of  \cite[Theorem 4]{Cianchi_CPDE}. 
\\ Suppose next that condtion \eqref{eq:A3} is in force. By inequality \eqref{B.5},
$$ \int_0^\infty u^* (r) r^{\frac{s-n} n} \dr\leq 2  \|u\|_{L^A(\rn)} \|r^{\frac{s-n} n}\|_{L^{\widetilde A}(0, \infty)}$$
if $u \in L^A(\rn)$. Here, we have exploited the rearrangement-invariance of the norm in $L^A(\rn)$.
Computations show that
\begin{equation}\label{conv7}
\|r^{\frac{s-n} n}\|_{L^{\widetilde A}(0, \infty)} = \bigg(\frac n{n-s} \int_0^\infty \frac{\widetilde A(t)}{t^{1+\frac n{n-s}}}\, \dt\bigg)^{\frac{n-s}n},
\end{equation}
see e.g. \cite[Equation (3.10)]{cianchi_ASNS}. Hence, inequality \eqref{conv6} yields
\begin{align}\label{conv8}
 \|u*v\|_{L^{\infty}(\R^n)}   \leq   \|v\|_{L^{\frac{n}{n-s}}(\R^n)}\bigg(\frac n{n-s} \int_0^\infty \frac{\widetilde A(t)}{t^{1+\frac n{n-s}}}\, \dt\bigg)^{\frac{n-s}n} \|u\|_{L^A(\R^n)}.
\end{align}
Set $M=  \int_{\R^n}A(|u|)\dx$ and consider the Young function $A_M$ given by $A_M(t) = \frac{A(t)}M$ for $t\geq 0$. Notice that 
$$\widetilde {A_M}(t)= \frac 1M \widetilde A (Mt) \quad \text{for $t \geq 0$.}$$
Since
$$ \|u\|_{L^{A_M}(\R^n)} \leq 1,$$
an application of inequality \eqref{conv8} with $A$ replaced by $A_M$ yields, after a change of variable in the integral,
\begin{align}\label{conv9}
 \|u*v\|_{L^{\infty}(\R^n)}  & \leq   \|v\|_{L^{\frac{n}{n-s}}(\R^n)}\bigg(\frac n{n-s} \int_0^\infty \frac{\widetilde A(t)}{t^{1+\frac n{n-s}}}\, \dt\bigg)^{\frac{n-s}n} M^{\frac sn} 
\\ \nonumber & =     \|v\|_{L^{\frac{n}{n-s}}(\R^n)}\bigg(\frac n{n-s} \int_0^\infty \frac{\widetilde A(t)}{t^{1+\frac n{n-s}}}\, \dt\bigg)^{\frac{n-s}n} \bigg(  \int_{\R^n}A(|u|)\dx \bigg)^{\frac sn}.
\end{align}
Next, the following chain holds:
\begin{align}\label{conv10}
\int_0^\infty \frac{\widetilde A(t)}{t^{1+\frac n{n-s}}}\, \dt & \leq \int_0^\infty \frac{t a^{-1}(t)}{t^{1+\frac n{n-s}}}\, \dt = \int_0^\infty t^{-\frac n{n-s}}\int_0^{a^{-1}(t)}\dr\, \dt 
\\ \nonumber &=  \int_0^\infty \int_{a(r)}^\infty t^{-\frac n{n-s}}\, \dt\, \dr 
= \frac{n-s}s \int_0^\infty \frac 1{a(r)^{\frac s{n-s}}}\, \dr
\\ \nonumber & \leq \frac{n-s}s \int_0^\infty \bigg(\frac r {A(r)}\bigg)^{\frac s{n-s}}\, \dr =  \frac{n-s}s\, (t_\infty)^{\frac n{n-s}},
\end{align}
where $t_\infty$ is defined equation \eqref{tinfinity}.
Coupling inequality \eqref{conv9} with \eqref{conv10} implies that
\begin{align}\label{conv11}
 \|u*v\|_{L^{\infty}(\R^n)}    \leq    t_\infty\, \Big(\frac ns\Big)^{\frac {n-s}n} \|v\|_{L^{\frac{n}{n-s}}(\R^n)} \bigg(  \int_{\R^n}A(|u|)\dx \bigg)^{\frac sn}.
\end{align}
Hence, 
\begin{align}\label{conv12}
\frac  {|u*v(x)|}{ \big(\tfrac ns\big)^{\frac {n-s}n} \|v\|_{L^{\frac{n}{n-s}}(\R^n)} \Big(  \int_{\R^n}A(|u|)\dx \Big)^{\frac sn}} \leq   t_\infty \quad \text{for a.e. $x \in \R^n$.}
\end{align}
Therefore, on replacing, if necessary, the constant $c$ in inequality \eqref{conv5} by the constant $c'=\max\{c, 2\big(\tfrac ns\big)^{\frac {n-s}n}\}$, one may assume that inequality \eqref{conv12} is fulfilled in the proof of \eqref{conv5}. If this is the case, one can verify that, in the proof of   \cite[Theorem 4]{Cianchi_CPDE}, the function $H^{-1}$ is always evaluated in the interval $[0, t_\infty)$. Since, in this interval, it coincides with the classical inverse of $H$, the same proof of  \cite[Theorem 4]{Cianchi_CPDE} applies to deduce \eqref{conv5} also in this case.
\\ Finally, inequality \eqref{eq:1903a} follows, via the definition of Luxemburg norm, via an application of inequality \eqref{conv5} with $u$ replaced by $\frac{u}{\|u\|_{L^A(\R^n)}}$.
\end{proof}

\section{Proof of Theorem \ref{besovequiv}}\label{sec:besov}
 
This section is devoted to the proof of the equivalence of Gagliardo-Slobodecki norms and Besov type norms in fractional Orlicz-Sobolev spaces. In this proof,  and  in what follows, by $c$ and $c_i$, with $i\in \mathbb N$, we denote constants that may only depend on the quantities specified in the statement of the result to be proved and may vary from one equation to another.

\begin{proof}[Proof of Theorem \ref{besovequiv}] First, consider the case when $s\in(0,1)$. In order to prove equation \eqref{besov7}, it   suffices to show that  there exist constants $c_1$ and $c_2$, depending  on $n$, such that
\begin{equation}\label{besov6}
c_1 |u|_{B^{s,A}(\rn)}  \leq   |u|_{s,A,\rn}  \leq c_2 |u|_{B^{s,A}(\rn)}
\end{equation}
for every $u\in \mathcal M(\rn)$. Given  such a function $u$, the use of 
 polar coordinates yields
\begin{align*}
\iint_{\R^{n}\times\R^{n}}A\Big(\frac{|u(x)-u(y)|}{|x-y|^{s}}\Big)&\,\frac{\dd\,(x,y)}{|x-y|^n}  = \iint_{\R^{n}\times\R^{n}}A\Big(\frac{|u(x+h)-u(x)|}{|h|^{s}}\Big)\dx\,\frac{\dd h}{|h|^n}\\
& = \int_{0}^{\infty}\int_{\R^{n}}\int_{\partial B_1(0)}A\Big(\frac{|u(x+\varrho\omega)-u(x)|}{\varrho^s}\Big)\,\dd\omega\dx\frac{\dd \varrho}{\varrho}
\end{align*}
To estimate the latter integral from above,
we write $\omega$ as a linear combination of the vectors $e_j$ and use the elementary inequality
\begin{align*}
|u(x+\varrho\omega)-u(x)| & \leq \sum_{j=1}^{n}|u(x+\varrho\omega_{1}e_{1}+...+\varrho\omega_{j}e_{j})-u(x+\varrho\omega_{1}e_{1}+...+\varrho\omega_{j-1}e_{j-1})|
\end{align*}
for suitable $\omega_j \in [-1, 1]$,  $ j=1, \dots , n$,
where we have set $e_0=0$. Owing to this inequality and a change of coordinates,  
\begin{align*}
\int_{0}^{\infty}\int_{\R^{n}}&\int_{\partial B_1(0)}A\Big(\frac{|u(x+\varrho\omega)-u(x)|}{\varrho^s}\Big)\,\dd\omega\dx\frac{\dd \varrho}{\varrho}\\
& \leq \sum_{j=1}^{n}\int_{0}^{\infty}\int_{\partial B_1(0)}\int_{\R^{n}}A\Big(c\frac{|u(x+\varrho\omega_j e_{j})-u(x)|}{\varrho^s}\Big)\,\dd\omega\dx\frac{\dd \varrho}{\varrho}\\
&=\sum_{j=1}^{n}\int_{\partial B_1(0)}\int_{\R^{n}}\int_{0}^{\infty}A\Big(c\frac{|u(x+\varrho\omega_j e_{j})-u(x)|}{\varrho^s}\Big)\frac{\dd \varrho}{\varrho}\,\dd\omega\dx.
\end{align*}
For $\omega\in\partial B_1(0)$ and $j\in\{1,\dots,n\}$, we have that
\begin{align*}
\int_{0}^{\infty}\int_{\R^{n}}A\Big(c\frac{|u(x+\varrho\omega_j e_{j})-u(x)|}{\varrho^s}\Big)\dx\frac{\dd \varrho}{\varrho}&=\int_{0}^{\infty}\int_{\R^{n}}A\Big(c|\omega_j|^s\frac{|u(x+\varrho e_{j})-u(x)|}{\varrho^s}\Big)\dx\frac{\dd \varrho}{\varrho}\\
&\leq\int_{0}^{\infty}\int_{\R^{n}}A\Big(c\frac{|u(x+\varrho e_{j})-u(x)|}{\varrho^s}\Big)\dx\frac{\dd \varrho}{\varrho}.
\end{align*}
Notice that this equality holds  trivially  if $\omega_j=0$, whereas if $\omega_j<0$ it follows via a preliminary change of variable from $x$ into $ x-\varrho\omega_j e_{j}$.
Altogether, we conclude that
\begin{align}\label{eq:trivial}
\iint_{\R^{n}\times\R^{n}}&A\Big(\frac{|u(x)-u(y)|}{|x-y|^{s}}\Big)\,\frac{\dd\,(x,y)}{|x-y|^n} \leq \sum_{j=1}^{n}\int_{0}^{\infty}\int_{\R^{n}}A\Big(c\frac{|u(x+\varrho e_{j})-u(x)|}{\varrho^s}\Big)\dx\frac{\dd \varrho}{\varrho}.
\end{align}
We now prove the reverse inequality. To fix ideas, assume that $j=n$.  Given $x \in \rn$, set $x=(x', x_n)$ with $x'\in\R^{n-1}$, and
\begin{align*}
Q_{r}(x')=\bigtimes_{j=1}^{n-1}(x_j-r,x_j+r).
\end{align*}
Since $A$ is an increasing function,
\begin{align*}
A\Big(\frac{|u(x+\varrho e_{n})-u(x)|}{\varrho^s}\Big)
&\leq \dashint_{Q_{\varrho/2}(x')}A\Big(2\frac{|u(x',x_n+\varrho )-u(y',x_n+\varrho/2)|}{\varrho^s}\Big)\,\dd y'\\
& \quad +\dashint_{Q_{\varrho/2}(x')}A\Big(2\frac{|u(y',x_n+\varrho/2 )-u(x',x_n)|}{\varrho^s}\Big)\,\dd y',
\end{align*}
where  $\dashint_E=\frac 1{|E|}\int_E$,  the averaged integral over the set $E$.
Hence,
\begin{align}\label{march300}
\int_{0}^{\infty}\int_{\R^{n}}&A\Big(\frac{|u(x+\varrho e_{n})-u(x)|}{\varrho^s}\Big)\dx\frac{\dd \varrho}{\varrho}\\ \nonumber &\leq \int_0^\infty\int_{\R}\int_{\R^{n-1}} \dashint_{Q_{\varrho/2}(x')}A\Big(2\frac{|u(x',x_n+\varrho )-u(y',x_n+\varrho/2)|}{\varrho^s}\Big)\,\dd y'\,\dd x'\,\dd x_n\frac{\dd \varrho}{\varrho}\\ \nonumber 
&\quad +\int_0^\infty\int_{\R}\int_{\R^{n-1}}\dashint_{Q_{\varrho/2}(x')}A\Big(2\frac{|u(y',x_n+\varrho/2 )-u(x',x_n)|}{\varrho^s}\Big)\,\dd y'\,\dd x'\,\dd x_n\frac{\dd \varrho}{\varrho}.
\end{align}
One has that
\begin{align}\label{march301}
 \int_0^\infty&\int_{\R}\int_{\R^{n-1}} \dashint_{Q_{t/2}(x')}A\Big(2\frac{|u(x',x_n )-u(y',x_n-\varrho/2)|}{\varrho^s}\Big)\,\dd y'\,\dd x'\,\dd x_n\frac{\dd \varrho}{\varrho}\\ \nonumber 
&=\int_0^\infty\int_{\R}\int_{\R^{n-1}}\int_{\R^{n-1}} \chi_{Q_{\varrho/2}(x')}(y')A\Big(2\frac{|u(x',x_n )-u(y',x_n-\varrho/2)|}{\varrho^s}\Big)\,\dd y'\,\dd x'\,\dd x_n\frac{\dd \varrho}{\varrho^{n}}\\ \nonumber
&=\int_0^\infty\int_{\R^{n-1}}\int_{\R^{n-1}}\int_{\R} \chi_{Q_{\varrho/2}(x')}(y')A\Big(2\frac{|u(x',x_n )-u(y',x_n-\varrho/2)|}{\varrho^s}\Big)\,\dd x_n\,\dd y'\,\dd x'\frac{\dd \varrho}{\varrho^{n}}\\ \nonumber
&=\int_0^\infty\int_{\R^{n-1}}\int_{\R^{n-1}}\int_{\R} \chi_{(2|y'-x'|_\infty,\infty)}(\varrho)A\Big(2\frac{|u(x',x_n )-u(y',x_n-\varrho/2)|}{\varrho^s}\Big)\,\dd x_n\,\dd y'\,\dd x'\frac{\dd \varrho}{\varrho^{n}},
\end{align}
where $\chi_E$ stands for the characteristic function of the set $E$.
Here, we have made use of the fact  that $y'\in Q_{\varrho/2}(x')$ if and only if $\varrho>2|y'-x'|_\infty,$ where $|\cdot|_\infty$ denotes the maximum norm in $\R^{n-1}$. 
Since
$$|(x',x_n )-(y',x_n-\varrho/2)|\leq \,c\varrho$$
for some constant $c=c(n)$ and for $\varrho\in (2|y'-x'|_\infty,\infty)$, we   deduce  from equation \eqref{march301} the following bound for the first addend on the right-hand side of inequality \eqref{march300}:
\begin{align*}
\int_0^\infty& \int_{\R}\int_{\R^{n-1}} \dashint_{Q_{\varrho/2}(x')}A\Big(2\frac{|u(x',x_n+\varrho )-u(y',x_n+\varrho/2)|}{\varrho^s}\Big)\,\dd y'\,\dd x'\,\dd x_n\frac{\dd \varrho}{\varrho}
\\ \nonumber &\leq \int_0^\infty\int_{\R^{n-1}}\int_{\R^{n-1}}\int_{2|y'-x'|_\infty}^\infty A\Big(2\frac{|u(x',x_n )-u(y',x_n-\varrho/2)|}{|(x',x_n )-(y',x_n-\varrho/2)|^s}\Big)\frac{\dd \varrho\,\dd x_n\,\dd y'\,\dd x'}{|(x',x_n )-(y',x_n-\varrho/2)|^n}\\
&\leq \int_{\R}\int_{\R^{n-1}}\int_{\R^{n-1}}\int_{\R} A\Big(2\frac{|u(x',x_n )-u(y',y_n)|}{|(x',x_n )-(y',y_n)|^s}\Big)\frac{\dd y_n\dd x_n\,\dd y'\,\dd x'}{|(x',x_n )-(y',y_n)|^n}\\
&=\iint_{\R^{n}\times\R^{n}}A\Big(2\frac{|u(x)-u(y)|}{|x-y|^{s}}\Big)\,\frac{\dd\,(x,y)}{|x-y|^n}.
\end{align*}
An analogous argument tells us that the second addend on the right-hand side of inequality \eqref{march300} admits the same bound.
In conclusion, since, of course,  $e_n$ can be replaced by any $e_j$ with $j=1, \dots , n-1$, we have shown that  
\begin{align}
\label{eq:2402}
\sum_{j=1}^{n}\int_{0}^{\infty}&\int_{\R^{n}}A\Big(\frac{|u(x+\varrho e_{j})-u(x)|}{\varrho^s}\Big)\dx\frac{\dd \varrho}{\varrho} \leq  
\iint_{\R^{n}\times\R^{n}}A\Big(c\frac{|u(x)-u(y)|}{|x-y|^{s}}\Big)\,\frac{\dd\,(x,y)}{|x-y|^n}.
\end{align}
Equation \eqref{besov6} is thus proved.
\\ When $s>1$,  from equation \eqref{besov6} applied with $s$ replaced by $\{s\}$ and $u$ by $\nabla ^{[s]}u$, one obtains that
\begin{equation}\label{besov6bis}
c_1 |\nabla ^{[s]}u|_{B^{\{s\},A}(\rn)}  \leq   |\nabla ^{[s]}u|_{\{s\},A,\rn}  \leq c_2 |\nabla ^{[s]}u|_{B^{\{s\},A}(\rn)}
\end{equation}
for $B^{s,A}(\rn)$.
Equation \eqref{besov7} is   a consequence of  \eqref{besov6bis}.
\end{proof}

\section{Equivalent  norms defined via oscillations}\label{sec:osc}

 Here we focus on the equivalence of Gagliardo-Slobodecki norms and norms of oscillation type.
This is the content   of Theorem \ref{thmosc}, whose proof requires a couple of preliminary lemmas.
 
\begin{lemma}\label{lem:BsA}
Let $s\in (0,\infty)\setminus \N$. If $u\in F^{s,A}(\R^n)$,  then $u$ admits weak derivatives of order $[s]$. Moreover,   $\nabla^{[ s]}u\in L^A(\R^n)$ and $\nabla^{[ s]}u\in F^{\{s\},A}(\R^n)$, and  there exist positive constants $c_1$ and $c_2$, depending on $n$ and $s$, such that
\begin{align}\label{eq:0603}
  \int_{\R^n}A(c_1|\nabla ^{[s]}u|)\dx\leq 
\sum_{i=0}^\infty\int_{\R^n}&A\big(2^{i\{s\}}\big|\varphi_i(D)\nabla^{[ s]} u\big|\big)\dx\leq \sum_{i=0}^\infty \int_{\R^n}A(c_2 \,2^{is}|\varphi_i(D)u|)\dx
\end{align}
for every $u\in F^{s,A}(\R^n)$.
\end{lemma}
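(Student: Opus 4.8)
The statement is a Fourier-analytic regularity result: a distribution with finite $F^{s,A}$ norm possesses weak derivatives up to order $[s]$, those derivatives lie in $L^A$ and in $F^{\{s\},A}$, and the modular of $\nabla^{[s]}u$ (as well as its Littlewood--Paley modular) is controlled by the Littlewood--Paley modular of $u$. The heart of the matter is the right-hand inequality of \eqref{eq:0603}: passing from the frequency pieces $\varphi_i(D)u$ to the frequency pieces of the derivatives. I would first treat the model case $s\in(0,1)$, so $[s]=0$, $\{s\}=s$, and then only the left inequality (that $u\in L^A$) requires work; afterwards I would bootstrap to general $s$ by applying the $s\in(0,1)$ statement iteratively to $\partial_k u$.

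**Step 1: the key derivative estimate via multiplier identities.** For a multi-index $\alpha$ with $|\alpha|=[s]$, write formally $D^\alpha u = \sum_{i=0}^\infty D^\alpha \varphi_i(D)u$. The point is that $D^\alpha \varphi_i(D)u = 2^{i|\alpha|}\, m_i(D)\big(\tilde\varphi_i(D)u\big)$ where $m_i(\xi) = 2^{-i|\alpha|}(i\xi)^\alpha \varphi_i(\xi)$ is, after rescaling $\xi\mapsto 2^i\xi$, a fixed Schwartz function supported in the annulus $\{1/2\le|\xi|\le 2\}$ — or, more precisely, one can write $D^\alpha\varphi_i(D)u = 2^{i|\alpha|}\,\psi_i(D)\varphi_i(D)u$ where $\psi_i = \widehat{K_i}$ with $K_i(x)=2^{in}K(2^i x)$ and $K$ a fixed $L^1(\rn)$ function with $\|K\|_{L^1}$ independent of $i$ (because $(i\xi)^\alpha 2^{-i|\alpha|}\eta(\xi)$ for a fixed bump $\eta$ equal to $1$ on the support of $\varphi_0(\cdot)-\varphi_0(2\cdot)$ is the Fourier transform of an $L^1$ kernel, by standard facts on compactly supported smooth multipliers). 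Then the convolution inequality of Theorem \ref{lem:cont0}, in its \emph{modular} form \eqref{eq:1903'}, gives
\begin{align*}
\int_{\rn} A\big(2^{i\{s\}}|\varphi_i(D)\nabla^{[s]}u|\big)\dx
= \int_{\rn} A\big(2^{is}|\psi_i(D)\varphi_i(D)u|\big)\dx
\le \int_{\rn} A\big(c\, 2^{is}|\varphi_i(D)u|\big)\dx,
\end{align*}
with $c=\|K\|_{L^1}$ independent of $i$; summing over $i$ yields the right inequality in \eqref{eq:0603}. (One must be slightly careful that $\varphi_i(D)\nabla^{[s]}u$ should be interpreted as the $i$-th Littlewood--Paley piece of the distribution $\nabla^{[s]}u := \sum_i D^\alpha\varphi_i(D)u$, which is legitimate once that series converges in $\mathcal S'$; this is ensured a posteriori by the estimate itself together with the finiteness of the $F^{s,A}$ norm.)

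**Step 2: from frequency pieces to an $L^A$ function.** The left inequality $\int A(c_1|\nabla^{[s]}u|)\dx \le \sum_i \int A(2^{i\{s\}}|\varphi_i(D)\nabla^{[s]}u|)\dx$ is a ``lifting'' estimate: summing Littlewood--Paley pieces with a summable weight lands in $L^A$. Set $v_i = \varphi_i(D)\nabla^{[s]}u$; since $\{s\}>0$, pick $\theta\in(0,\{s\})$ and write $\nabla^{[s]}u = \sum_i v_i = \sum_i 2^{-i\theta}(2^{i\theta}v_i)$. By convexity of $A$ and the discrete Jensen inequality with the probability weights $p_i = 2^{-i\theta}/\sum_j 2^{-j\theta}$, one gets, with $\Lambda=\sum_j 2^{-j\theta}$,
\begin{align*}
A\Big(\tfrac1\Lambda \big|\textstyle\sum_i v_i\big|\Big)
= A\Big(\textstyle\sum_i p_i\, 2^{i\theta}|v_i|\Big)
\le \textstyle\sum_i p_i\, A\big(2^{i\theta}|v_i|\big)
\le \tfrac1\Lambda \textstyle\sum_i A\big(2^{i\{s\}}|v_i|\big),
\end{align*}
using $2^{i\theta}\le 2^{i\{s\}}$ and $A$ nondecreasing in the last step; integrating over $\rn$ gives the left inequality with $c_1 = 1/\Lambda$. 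The same computation, applied with the frequency index shifted, shows simultaneously that $\nabla^{[s]}u\in F^{\{s\},A}(\rn)$: indeed by construction $\varphi_l(D)\nabla^{[s]}u$ only involves $v_i$ with $|i-l|\le 1$ (by the support property \eqref{item:supporitem} and the almost-orthogonality in item (3)), so $\sum_l A(2^{l\{s\}}|\varphi_l(D)\nabla^{[s]}u|)\le c\sum_i A(2^{i\{s\}}|v_i|)$, which is finite by Step 1.

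**Step 3: weak derivatives and bookkeeping for general $s$.** To see that $\nabla^{[s]}u$ as constructed is genuinely the vector of weak derivatives of $u$, I would test against $\phi\in C_0^\infty(\rn)$ and use $\langle \sum_i D^\alpha\varphi_i(D)u,\phi\rangle = \langle u, (-1)^{[s]}D^\alpha\phi\rangle$ termwise, justified by convergence of the partial sums in $\mathcal S'$ (which follows from the $L^A\hookrightarrow L^1_{\rm loc}$ bound and the estimates above) — so the distributional derivative of the $L^A$-limit is the right object, and the $L^A$-membership upgrades ``distributional'' to ``weak.'' For general $s>1$ one applies the case $[s]=0$ of the lemma repeatedly: $u\in F^{s,A}$ implies $\partial_k u\in F^{s-1,A}$ (the multiplier $2^{-i}i\xi_k\varphi_i(\xi)$ is again a fixed compactly supported smooth bump, so the one-derivative step is exactly Step 1 with $|\alpha|=1$, $\{s\}$ replaced by $s-1$ when $s-1\notin\N$, or by a full integer step when it is), iterate $[s]$ times, and invoke Step 2 at the last fractional level.

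\textbf{Main obstacle.} The genuinely delicate point is \emph{Step 1}: verifying that the rescaled multipliers $2^{-i|\alpha|}(i\xi)^\alpha\varphi_i(2^i\cdot)$ have $L^1$-bounded inverse Fourier transforms \emph{uniformly in $i$}, so that the $L^1$-kernel convolution inequality \eqref{eq:1903'} — which is the only convolution tool available that respects arbitrary Young functions without a $\Delta_2$ hypothesis — can be applied with a constant independent of $i$. This is standard for $L^p$ via Mikhlin, but here one cannot afford any loss, and must exploit that after dyadic rescaling all these multipliers are supported in one fixed annulus and are uniformly bounded in every $C^N$ norm; a clean way is to fix a single $\eta\in C_0^\infty$ equal to $1$ on $\operatorname{sprt}(\varphi_1)\cup\operatorname{sprt}(\varphi_1(2^{-1}\cdot))$-type annulus and write the multiplier as $\big(2^{-i|\alpha|}(i\xi)^\alpha\eta(2^{-i}\xi)\big)\varphi_i(\xi)$, so only $\varphi_i$ varies and $2^{-i|\alpha|}(i\xi)^\alpha\eta(2^{-i}\xi)=\big((i\cdot)^\alpha\eta\big)(2^{-i}\xi)$ is a dilate of a single fixed $\widehat{L^1}$ function, whose kernel has scale-invariant $L^1$ norm. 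The other steps are convexity/Jensen bookkeeping with the summable weight $2^{-i\theta}$, which is routine once $\{s\}>0$ is available.
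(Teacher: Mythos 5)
Your proposal is correct and follows essentially the same route as the paper's proof: you factor $\partial^\alpha\varphi_i(D)u$ through the dilated multiplier $(\mathfrak i\,\xi)^\alpha 2^{-i[s]}\eta(2^{-i}\xi)$, whose inverse Fourier transform has dilation-invariant $L^1$ norm, apply the modular convolution inequality \eqref{eq:1903'} to get the second inequality in \eqref{eq:0603}, and then sum the Littlewood--Paley pieces via convexity/Jensen with geometric weights (plus Fatou) to obtain $L^A$-membership and the first inequality. The only deviations are cosmetic (using a weight $2^{-i\theta}$ with $\theta<\{s\}$ instead of $2^{-i\{s\}}$, and an optional one-derivative-at-a-time iteration for $s>1$), and they do not affect the argument.
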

\begin{proof}
 We begin by proving the first inequality in \eqref{eq:0603} for $s\in(0,1)$.  Notice that, for  $s$ in this range, the second inequality trivially holds as an identity with $c_2=1$. We may assume, without loss of generality, that 
$u\in F^{s,A}(\R^n)$ is such that  
\begin{align}\label{apr40}
\sum_{i=0}^\infty \int_{\R^n}A(2^{is}|\varphi_i(D)u|)\dx<\infty.
\end{align}
 By the convexity of the function $A$, for every $m \in \N$,
\begin{align}\label{jan80}
\int_{\R^n}A\bigg(\frac 1{\sum _{i=0}^\infty 2^{-is}}\sum_{i=0}^m|\varphi_i(D)u|\bigg)\dx&\leq 
\int_{\R^n}A\bigg(\frac 1{\sum _{i=0}^m 2^{-is}}\sum_{i=0}^m|\varphi_i(D)u|\bigg)\dx
\\ \nonumber & \leq 
 \sum_{i=0}^m  \frac{2^{-is}}{\sum _{i=0}^m 2^{-is}}  \int_{\R^n} A\big( 2^{si} |\varphi_i(D)u|\big)\dx
\\ \nonumber &\leq 
 \sum_{i=0}^\infty \int_{\R^n}  A\big( 2^{si} |\varphi_i(D)u|\big)\dx
<\infty.
\end{align}
Hence, by Fatou's Lemma, the series $\sum_{i=0}^\infty|\varphi_i(D)u|$, and hence  $\sum_{i=0}^\infty\varphi_i(D)u$, converge a.e. in $\rn$ and 
\begin{align}\label{jan80'}
\int_{\R^n}A\bigg(\bigg|(1-2^{-s})\sum_{i=0}^\infty\varphi_i(D)u\bigg|\bigg)\dx&\leq 
\int_{\R^n}A\bigg((1-2^{-s})\sum_{i=0}^\infty|\varphi_i(D)u|\bigg)\dx
\\ \nonumber & \leq 
 \sum_{i=0}^\infty \int_{\R^n}A(2^{si}|\varphi_i(D)u|)\dx.
\end{align}
This tells us that $u \in L^A(\rn)$ and  inequality  \eqref{eq:0603} holds for $s\in(0,1)$.
\\
Next, we show that, if $s\in(1,\infty)\setminus \N$ and $u\in F^{s,A}(\R^n)$, 
then $u$ is 
$[ s]$-times weakly differentiable. Assume, without loss of generality,  that $u$ fulfills inequality \eqref{apr40}. For a multi-index $\alpha=(\alpha_1,\dots,\alpha_n)\in\N^n_0$ with $|\alpha|=\alpha_1+\dots\alpha_n=[ s]$ we set $\xi^\alpha=\xi_1^{\alpha_1}\dots\xi_n^{\alpha_n}$. 
Set, for simplicity, 
\begin{equation}\label{march318}
u_i=\varphi_i(D)u,
\end{equation}
whence
 $u=\sum_{i=0}^\infty u_i$.  We claim that the function $u_i$ is  $[s]$-times weakly differentiable for every $i$.  Clearly, to prove that the weak derivative $\partial^\alpha u_i$ exists and coincides with $\mathfrak F^{-1}(  (\mathfrak i  \, \cdot)^\alpha \widehat{u}_i)$ it suffices to show that the latter is a function in $L^1_{\rm loc}(\R^n)$. Here, $\mathfrak i $ denotes the imaginary unit. Let $\eta \in C^\infty_0(\rn)$  be a function such that, on setting  $\eta_i(x)=\eta(2^{-i}x)$, we have that $\eta_i=1$ in ${\rm spt} \varphi _i$. Then
\begin{align*}
\mathfrak F^{-1}( (\mathfrak i \,\cdot)^\alpha \widehat{u}_i)=\mathfrak F^{-1}( (\mathfrak i\, \cdot)^\alpha\varphi_i\widehat u)=\mathfrak F^{-1}\Big( \frac{(\mathfrak i \,\cdot)^\alpha\eta_i}{2^{si}}\varphi_i 2^{si}\widehat{u}\Big)=\mathfrak F^{-1}\Big(  \frac{(\mathfrak i \, \cdot)^\alpha\eta_i}{2^{si}}\Big)*\mathfrak F^{-1}\big(\varphi_i 2^{si}\widehat{u}\big).
\end{align*}
For each $i\in\N$,  one has that $\eta_i\in\mathscr S(\R^n)$.  Therefore, $  \frac{(\mathfrak i\, \cdot)^\alpha\eta_i}{2^{si}}\in\mathscr S(\R^n)$ and hence 
$\mathfrak F^{-1}\big( \frac{(\mathfrak i\, \cdot)^\alpha\eta_i}{2^{si}}\big)\in \mathscr S(\R^n)\subset L^1(\R^n)$. Estimate \eqref{eq:1903} yields 
\begin{align*}
\|\mathfrak F^{-1}((\mathfrak i \, \cdot)^\alpha \widehat{u}_i)\|_{L^A(\R^n)}\leq\,c\,\|\mathfrak F^{-1}\big(\varphi_i 2^{si}\widehat{u}\big)\|_{L^A(\R^n)}<\infty
\end{align*}
for some constant $c$ independent of $u$.
Consequently, $\mathfrak F^{-1}( (\mathfrak i \cdot)^\alpha \widehat{u}_i)\in L^A(\R^n)\subset L^1_{\rm loc}(\R^n)$. In particular, 
$u_i\in W^{[ s],A}(\R^n)$.  Moreover, by inequality  \eqref{eq:1903'}
and the identity
\begin{align}\label{march310}
\partial^\alpha u_i=\mathfrak F^{-1}(  (\mathfrak i\, \cdot)^\alpha \widehat{u}_i)=\mathfrak F^{-1}((\mathfrak i \, \cdot)^\alpha \varphi_i\widehat{u})=\mathfrak F^{-1}\big(\varphi_i\widehat{\partial^\alpha u}\big)=\varphi_i(D)\partial^\alpha u,
\end{align}
which holds for every {\color{red} $i\geq 0$} and $\alpha\in \N^n_0$ with $|\alpha|\leq [ s]$,
we also have that
\begin{align}\label{jan82}
\sum_{i=0}^\infty\int_{\R^n}&A\big(c 2^{\{s\}i}\big|\partial ^\alpha u_i\big|\big)\dx =
\sum_{i=0}^\infty\int_{\R^n}A\big(c 2^{\{s\}i}\big|\big(\mathfrak F^{-1}( (\mathfrak i\, \cdot)^\alpha \widehat{u}_i)\big|\big)\dx
\\ \nonumber &=  \sum_{i=0}^\infty
\int_{\R^n}A\big(c 2^{\{s\}i+[s]i}\big| 
\mathfrak F^{-1}\big((2^{-i}\cdot)^\alpha\eta(2^{-i}\cdot)\big)
*\mathfrak F^{-1}\big(\varphi_i \widehat{u}\big)\big|\big)
\big)\dx
\\ & \nonumber
 \leq\,\sum_{i=0}^\infty\int_{\R^n}A\big(2^{si}\big|\mathfrak F^{-1}\big(\varphi_i \widehat{u}\big)\big|\big)\dx 
 = \,\sum_{i=0}^\infty\int_{\R^n}A\big(2^{si}|\varphi_i(D) u|\big)\dx.
\end{align}
 Here we have used the fact that
\begin{align*}
\big\|\mathfrak F^{-1}\big((2^{-i}\cdot)^\alpha\eta(2^{-i}\cdot)\big)\big\|_{L^1(\R^n)}=\big\|2^{in}\mathfrak F^{-1}\big((\cdot)^\alpha\eta\big)(2^i\cdot)\big\|_{L^1(\R^n)}=\big\|\mathfrak F^{-1}\big((\cdot)^\alpha\eta\big)\big\|_{L^1(\R^n)}
\leq c
\end{align*}
for $i \geq 0$.
  Thanks to equations \eqref{march310} and \eqref{jan82}, an application of inequalities \eqref{jan80} and \eqref{jan80'} with $u$ replaced by  $\partial^ \alpha u_i$ and $s$ by $\{s\}$ tells us that $u\in W^{[ s],A}(\R^n)$ and
\begin{align}\label{jan81}\int_{\R^n} A(c_1 |\nabla ^{[s]}u|)\dx& \leq \sum_{|\alpha|= [s]}
\int_{\R^n} A\bigg(\bigg|c_2 \sum_{i=0}^\infty \partial^ \alpha u_i\bigg|\bigg)\dx
\\ \nonumber & \leq 
  \sum_{|\alpha|= [s]}\sum_{i=0}^\infty \int_{\R^n}A(c2^{\{s\}i} |\partial ^\alpha u_i|)\dx \\ \nonumber & \leq\,\sum_{i=0}^\infty\int_{\R^n}A\big(2^{si}|\varphi_i(D) u|\big)\dx
\end{align}
Inequality  \eqref{eq:0603} hence follows.
\end{proof}

The next lemma shows how the functions
$\varphi_i$ in the definition of $F^{s,A}(\R^n)$ can be replaced by functions
whose inverse Fourier transform has compact support. In fact, we only prove one of the two estimates required to prove that such a replacement results in an equivalent definition of $F^{s,A}(\R^n)$ (up to equivalent norms). The proof of the missing estimate is quite lengthy and not needed for our purposes (see e.g. \cite[Theorem 2.4.1]{Tr2} for the case of power type functions $A$).
\begin{lemma}\label{lem:support}
Let $A$ be a Young function and let $s>0$.
Assume that the function $\mathfrak K_0\in C^\infty_0(\R^n)$ satisfies the inequality
\begin{align}\label{eq:1904}
\mathfrak F(\mathfrak K_0){\color{black}>} 0\quad\text{in}\,\, B_2(0).
\end{align}
Given $N\in\N$, set $\mathfrak K= \Delta^N\mathfrak K_0$, where $\Delta$ denotes the Laplace operator.  
Then, there exists a positive constant   $c=c(s,n, \mathfrak K_0)$
 such that
\begin{align}\label{march313}
\int_{\R^n}A(2^{is}|\varphi_i(D)u|)\dx\leq \int_{\R^n}A(c2^{is}|2^{in}\mathfrak K(2^{i}\cdot)*u|)\dx,
\end{align}
for  $i\geq 1$ and  $u \in L^A(\rn)$.
\end{lemma}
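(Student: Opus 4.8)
The plan is to exhibit $\varphi_i(D)u$ as the convolution of $2^{in}\mathfrak K(2^i\cdot)*u$ against an $L^1(\R^n)$-kernel whose norm is bounded uniformly in $i\geq 1$, and then to invoke inequality \eqref{eq:1903'} of Theorem \ref{lem:cont0}.

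First I would record the Fourier-side identities. Since $\varphi_i(\xi)=\varphi_0(2^{-i}\xi)-\varphi_0(2^{-i+1}\xi)$, the rescaled bump $\Phi(\zeta):=\varphi_i(2^i\zeta)=\varphi_0(\zeta)-\varphi_0(2\zeta)$ does not depend on $i\geq1$; it belongs to $C_0^\infty(\R^n)$, and (because $\varphi_0\equiv1$ near the origin) its support is a compact subset of $B_2(0)$ not containing $0$. Taking Fourier transforms of $\mathfrak K=\Delta^N\mathfrak K_0$ with the convention $\mathfrak F(\partial_j f)=\mathfrak i\,\xi_j\mathfrak F(f)$ used in \eqref{march310}, one gets $\mathfrak F(\mathfrak K)=(-1)^N|\cdot|^{2N}\mathfrak F(\mathfrak K_0)$, and, by the dilation rule, $\mathfrak F\bigl(2^{in}\mathfrak K(2^i\cdot)\bigr)(\xi)=(-1)^N2^{-2iN}|\xi|^{2N}\mathfrak F(\mathfrak K_0)(2^{-i}\xi)$.

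Next I would construct the kernel. By assumption \eqref{eq:1904}, $\mathfrak F(\mathfrak K_0)$ is bounded away from $0$ on the compact set $\operatorname{supp}\Phi\subset B_2(0)$, and $|\cdot|^{2N}$ is smooth and nonvanishing there; hence the function $\Psi:=(-1)^N\Phi/\bigl(|\cdot|^{2N}\mathfrak F(\mathfrak K_0)\bigr)$, set equal to $0$ outside $\operatorname{supp}\Phi$, belongs to $C_0^\infty(\R^n)$. Consequently $\mathfrak F^{-1}\Psi\in\mathscr S(\R^n)\subset L^1(\R^n)$; set $c:=\|\mathfrak F^{-1}\Psi\|_{L^1(\R^n)}$. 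Defining $\psi_i(\xi):=\Psi(2^{-i}\xi)$, a direct check gives $\psi_i(\xi)\,\mathfrak F\bigl(2^{in}\mathfrak K(2^i\cdot)\bigr)(\xi)=\varphi_i(\xi)$ for every $\xi$ (the factor $(-1)^{2N}$ cancels and the powers of $2^{-i}$ cancel), while $\mathfrak F^{-1}\psi_i(x)=2^{in}(\mathfrak F^{-1}\Psi)(2^ix)$, so $\|\mathfrak F^{-1}\psi_i\|_{L^1(\R^n)}=c$ for all $i$.

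Finally, for $u\in L^A(\R^n)$ — which is a tempered distribution, since $L^A(\R^n)\subset L^1(\R^n)+L^\infty(\R^n)$ — the convolution theorem yields $\varphi_i(D)u=\mathfrak F^{-1}(\varphi_i\widehat u)=\mathfrak F^{-1}\psi_i*\bigl(2^{in}\mathfrak K(2^i\cdot)*u\bigr)$. By \eqref{eq:1903} the function $2^{in}\mathfrak K(2^i\cdot)*u$ belongs to $L^A(\R^n)$, so inequality \eqref{eq:1903'} applies with kernel $\mathfrak F^{-1}\psi_i$ and, using linearity of convolution to carry the scalar factor $2^{is}$, with the function $2^{is}\bigl(2^{in}\mathfrak K(2^i\cdot)*u\bigr)$; this gives precisely \eqref{march313} with the stated constant $c$ (the estimate being trivial when its right-hand side is infinite). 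The only genuinely delicate point is the smoothness of $\Psi$: it is here that hypothesis \eqref{eq:1904}, the compactness of $\operatorname{supp}\Phi$ inside $B_2(0)$, and the vanishing of $\varphi_i$ near the origin for $i\geq1$ are all used; once this is granted, the remainder is routine bookkeeping within the tempered-distribution framework.
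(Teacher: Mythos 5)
Your proposal is correct and follows essentially the same route as the paper: you factor $\varphi_i(\xi)=\psi_i(\xi)\,\mathfrak F\bigl(2^{in}\mathfrak K(2^i\cdot)\bigr)(\xi)$ with $\psi_i=\varphi_i/\widehat{\mathfrak K}(2^{-i}\cdot)$, use the scaling invariance to get a uniform $L^1$ bound on $\mathfrak F^{-1}\psi_i$, and conclude via inequality \eqref{eq:1903'} of Theorem \ref{lem:cont0}, exactly as in the paper's argument. Your treatment is in fact slightly more careful on two minor points the paper glosses over, namely the sign $(-1)^N$ coming from $\Delta^N$ and the role of the vanishing of $\varphi_i$ near the origin in guaranteeing smoothness of the quotient.
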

\begin{proof}  Let   $u\in L^A(\R^n)$.
Thanks to assumption \eqref{eq:1904}, we have that $\widehat{\mathfrak K}>0$ in $B_{2}(0)\setminus B_{1/2}(0)$. Hence,  
\begin{align}\label{eq:1904B}
\widehat{\mathfrak K}(2^{-i}\cdot)>0\quad\text{in}\quad B_{2^{i+1}}(0)\setminus B_{2^{i-1}}(0)\supset {\rm sprt}(\varphi _i).
\end{align}
Therefore,
\begin{align*}
\int_{\R^n}A(2^{is}|\varphi_i(D) u|)\dx&=
\int_{\R^n}A\Big(2^{is}\Big|\mathfrak F^{-1}\Big(\frac{\varphi_i}{\widehat{\mathfrak K}(2^{-i}\cdot)}\widehat{\mathfrak K}(2^{-i}\cdot)\widehat{ u}\Big)\Big|\Big)\dx\\
&=\int_{\R^n}A\Big(2^{is}\Big|\mathfrak F^{-1}\Big(\frac{\varphi_i}{\widehat{\mathfrak K}(2^{-i}\cdot)}\Big)*\mathfrak F^{-1}\Big(\widehat{\mathfrak K}(2^{-i}\cdot)\widehat{u}\Big)\Big|\Big)\dx
\end{align*}
for $i \geq 1$.
{\color{black} By setting $\psi(\xi) =\varphi_0(\xi)-\varphi(2\xi)$}, we have that
\begin{align}\label{march312}
\bigg\|\mathfrak F^{-1}\Big(\frac{\varphi_i}{\widehat{\mathfrak K}(2^{-i}\cdot)}\Big)\bigg\|_{L^1(\R^n)}&=2^{in}\bigg\|\mathfrak F^{-1}\Big(\frac{\psi}{\widehat{\mathfrak K}}\Big)(2^{i}\cdot)\bigg\|_{L^1(\R^n)}=\bigg\|\mathfrak F^{-1}\Big(\frac{\psi}{\widehat{\mathfrak K}}\Big)\bigg\|_{L^1(\R^n)},
\end{align}
{\color{black} for $i\geq 1$}.
Since $\widehat{\mathfrak K}$ is strictly positive in the support of {\color{black} $\psi$},  the function $\psi/\widehat{\mathfrak K}$  belongs to $\mathscr S(\R^n)$, and hence also $\mathfrak F^{-1}\big(\psi/\widehat{\mathfrak K}\big)$ does. Therefore, the last norm in equation \eqref{march312} is finite, and
 Lemma \ref{lem:cont0} yields
\begin{align*}
\int_{\R^n}A(2^{is}|\varphi_i(D) u|)\dx&\leq \int_{\R^n}A\big(c2^{is}\big|\mathfrak F^{-1}\big(\widehat{\mathfrak K}(2^{-i}\cdot)\widehat{u}\big)\big|\big)\dx.
\end{align*}
Hence  inequality \eqref{march313} follows, inasmuch as 
 $\mathfrak F^{-1}\big(\widehat{\mathfrak K}(2^{-i}\cdot)\widehat{u}\big)=2^{in}\mathfrak K(2^{i}\cdot)*u$.  Note that the latter equality holds in the sense of distributions, owing to a standard property of the inverse Fourier transform.   By our assumption on $u$ and Theorem \ref{lem:cont0},  we have that $2^{in}\mathfrak K(2^{i}\cdot)*u \in L^A(\rn)$. Therefore,  $\mathfrak F^{-1}\big(\widehat{\mathfrak K}(2^{-i}\cdot)\widehat{u}\big) \in L^A(\rn)$ as well.
%
%
\end{proof}

\begin{proof}[Proof of Theorem \ref{thmosc}]  We shall prove that 

\begin{align}\label{eq:2903A'}
\int_{\R^n}A(c|u|)\dx&+\int_0^1\int_{\R^n}A\big(\osc^s (cu)(x,r)\big)\dx\frac{\dd r}{r}\leq \sum_{i=0}^\infty\int_{\R^n} A(2^{si}|\varphi_i(D)u|)\dx
\end{align}
for some constant $c=c(n,s)$ and  for $u \in F^{s,A}(\rn)$, and 
\begin{align}\label{eq:2903A''}
\sum_{i=0}^\infty\int_{\R^n} A(2^{si}|\varphi_i(D)u|)\dx \leq \int_{\R^n}A(c|u|)\dx+\int_0^1\int_{\R^n}A\big(c\osc^s (u)(x,r)\big)\dx\frac{\dd r}{r}
\end{align}
for   some constant $c=c(n,s)$ and  $u \in \mathcal O^{s,A}(\rn)$. Coupling inequality \eqref{eq:2903A'} with   \eqref{eq:2903A''} yields:
\begin{equation}\label{apr50}
\mathcal O^{s,A}(\R^n) = F^{s,A}(\R^n),
%
\end{equation}
with norms equivalent up to multiplicative constants depending on $n$ and $s$.
Equation \eqref{osc3} will then follow from Theorem \ref{thm:equi}, whose proof,  given in the next section, in turn, relies upon some inequalities to be established here.
\\ 
First, consider inequality \eqref{eq:2903A'}. 
Fix  $u\in F^{s,A}(\R^n)$.  Lemma \ref{lem:BsA} ensures that $u \in L^A(\rn)$. 
Set
\begin{align*}
\osc^s_A u(x,r) =\inf_{\mathfrak q\in\mathcal P_{[ s]}}\dashint_{B_r(x)}A\Big(\frac{|u-\mathfrak q|}{r^s}\Big)\dy.
\end{align*}
By Jensen's inequality,
\begin{align}\label{jan50}
\int_0^1\int_{\R^n}A\big(\osc^s u(x,r)\big)\dx\frac{\dd r}{r}&\leq \int_0^1\int_{\R^n}\osc^s_A u(x,r)\dx\frac{\dd r}{r}\\ \nonumber
&{\color{black} \leq } \sum_{l=0}^\infty\int_{\R^n}\int_{2^{-l-1}}^{2^{-l}}\osc^s_A (c_1u)(x,2^{-l})\frac{\dd r}{r}\dx\\ \nonumber
&\leq \sum_{l=0}^\infty\int_{\R^n}\osc^s_A (cu)(x,2^{-l})\dx.
\end{align}
First, assume that $s\in(0,1)$.  Jensen's inequality again entails that
\begin{align*}
\osc^s_A (cu)(x,2^{-j})
&\leq\dashint_{B_{2^{-j}}(x)}\dashint_{B_{2^{-l}}(x)}A\big(c2^{js}|u(y)-u(z)|\big)\dz\dy\\
&=\dashint_{B_{2^{-l}}(0)}\dashint_{B_{2^{-l}}(0)}A\big(c2^{ls}|u(x+y)-u(x+z)|\big)\dz\dy,
\end{align*}
whence
\begin{align*}
\int_{\R^n}\osc^s_A (cu)(x,{\color{black} 2^{-j}})\dx
&\leq\dashint_{B_{2^{-j}}(0)}\dashint_{B_{2^{-l}}(0)}\int_{\R^n}A\big(c2^{ls}|u(x+y)-u(x+z)|\big)\dx\dz\dy\\
&=\dashint_{B_{2^{-l}}(0)}\dashint_{B_{2^{-l}}(0)}\int_{\R^n}A\big(c2^{js}|u(x)-u(x+z-y)|\big)\dx\dz\dy\\
&\leq \sup_{|h|\leq 2^{-l+1}}\int_{\R^n}A\big(c2^{ls}|u(x)-u(x+h)|\big)\dx.
\end{align*}
Altogether,
\begin{align}\label{march315}
\int_0^1\int_{\R^n}\osc^s_A u(x,r)\dx\frac{\dd r}{r}&\leq\sum_{l=0}^\infty\sup_{|h|\leq 2^{-l+1}}\int_{\R^n}A\big(c2^{ls}|u(x)-u(x+h)|\big)\dx\\ \nonumber
&\leq\sum_{l=1}^\infty\sup_{|h|\leq 2^{-l}}\int_{\R^n}A\big(c2^{ls}|u(x)-u(x+h)|\big)\dx\\ \nonumber & \quad +\sup_{|h|\leq 2}\int_{\R^n}A\big(c|u(x)-u(x+h)|\big)\dx.
\end{align}
Clearly, the second addend on the rightmost side of equation \eqref{march315} is bounded by $\int_{\R^n}A\big(c|u|\big)\dx$, and, in turn, the latter is estimated (up to a constant in the argument of $A$) by the right-hand side of inequality \eqref{eq:2903A'}, thanks to the first inequality in \eqref{eq:0603}.
 \\ On the other hand,  the first addend is bounded by
\begin{align*}
\sum_{j=1}^n\sum_{l=1}^\infty \sup_{t\leq 2^{-l}}\int_{\R^{n}}A\Big(c 2^{ls}|u(x+t e_{j})-u(x)|\Big)\dx.
\end{align*}
Now, we claim that
%
\begin{align}\label{eq:2903}
\begin{aligned}
\sum_{l=1}^\infty \sup_{t\leq 2^{-l}}\int_{\R^{n}}A\Big(c 2^{ls}|u(x+t e_{j})-u(x)|\Big)\dx\leq \sum_{i=1}^\infty\int_{\R^{n}} A\big(c_12^{is}\big|u_i\big|\big)\dx,
\end{aligned}
\end{align}
where $u_i$ is defined as in \eqref{march318}. To prove inequality \eqref{eq:2903},
consider a function $\eta\in C_{0}^{\infty}(\R^{n})$ such that, on setting $\eta_{i}(x)=\eta(2^{-i}x)$ for $x \in \rn$, one  has that  $\eta_{i}(x)=1$ for $x\in \mathrm{sprt}(\varphi_{i})$ and for $i\geq 0$.
Since
\begin{equation}\label{jan76}
\check\eta_i * u_i = \mathfrak F^{-1}(\eta_i) * \mathfrak F^{-1}(\varphi_i \widehat u)= \mathfrak F^{-1}(\eta_i\varphi_i \widehat u)=\mathfrak F^{-1}(\varphi_i \widehat u)=u_i
\end{equation}
{\color{black} for $i \geq 0$,}
we have that
\begin{align*}
u=\sum_{i=0}^\infty u_i=\sum_{i=0}^\infty\check\eta_i*u_i.
\end{align*}
Thus, 
 \begin{align}\label{eq:2502}
\sum_{l=1}^\infty& \sup_{t\leq 2^{-l}}\int_{\R^{n}}A\Big(c 2^{ls}|u(x+t e_{j})-u(x)|\Big)\dx
\\ \nonumber
&\leq
\sum_{l=1}^\infty \sup_{t\leq 2^{-l}}\int_{\R^{n}}A\Big(c2^{ls}\sum_{i=0}^\infty|u_i(x+t e_{j})-u_i(x)|\Big)\dx\\ \nonumber
&\leq
 \int_{\R^{n}}\sum_{l=1}^\infty A\Big(c\sum_{i=0}^l2^{{\color{black}(l-i)s}}\sup_{t\leq 2^{-l}}2^{is-l}\Big|\frac{\check\eta_i*u_i(x+t e_{j})-\check\eta_i*u_i(x)}{t}\Big|\Big)\dx\\ \nonumber
&\qquad+ \sum_{l=1}^\infty\sup_{t\leq 2^{-l}}\int_{\R^{n}} A\Big( c\sum_{i=l+1}^{\infty}2^{(l-i)s}2^{is}|u_i(x+t e_{j})-u_i(x)|\Big)\dx
\\ \nonumber
 &\leq
 \int_{\R^{n}}\sum_{l=1}^\infty A\Big(c\sum_{i=0}^l2^{(l-i)(s-1)}\sup_{t\leq 2^{-i}}2^{is-i}\Big|\frac{\check\eta_i*u_i(x+t e_{j})-\check\eta_i*u_i(x)}{t}\Big|\Big)\dx\\
 \nonumber &\qquad+ \int_{\R^{n}}\sum_{l=1}^\infty A\Big( c\sum_{i=l+1}^{\infty}2^{(l-i)s}2^{is}|u_i(x)|\Big)\dx\\
 \nonumber
 &\qquad+\sum_{l=1}^\infty\sup_{t\leq 2^{-l}}\int_{\R^{n}}\ A\Big( c\sum_{i=l+1}^{\infty}2^{(l-i)s}2^{is}|u_i(x+t e_{j})|\Big)\dx\\ \nonumber
 &=\mathrm{I}+\mathrm{II}+\mathrm{III}.
\end{align}
A change of coordinates  shows that
$\mathrm{II}=\mathrm{III}$. Therefore, it suffices to estimate the terms $\mathrm{I}$ and $\mathrm{II}$.  The inequality for discrete convolutions from Lemma \ref{lem:discrete} enters the game at this stage.
\\ First, consider $\mathrm{I}$.
Set $a_l^{\mathrm{I}}=2^{l(s-1)}$ for $l\geq0$ and $a_l^{\mathrm{I}}=0$ for $l<0$, and 
$$b_i^{\mathrm{I}}=\sup_{t\leq 2^{-i}}2^{is-i}\Big|\frac{\check\eta_i*u_i(x+t e_{j})-\check\eta_i*u_i(x)}{t}\Big| \quad \text{for $i\geq 0$,}$$
and $b_i^{\mathrm{I}}=0$ for $i<0$.
Since   $\{a_l^{\mathrm{I}}\}\in \ell^1(\mathbb Z)$, 
an application  of inequality \eqref{dec5}  tells us that
\begin{align}\label{march329}
\mathrm{I}&=\sum_{l\in\mathbb Z}A\bigg(c\sum_{i\in\mathbb Z}{\color{black} a_{l-i}^{\mathrm{I}}b_{i}^{\mathrm{I}}}\bigg)\\ \nonumber
 &\leq\int_{\R^{n}}\sum_{i=1}^\infty A\Big(c_1\sup_{t\leq 2^{-i}}2^{is-i}\Big|\frac{\check\eta_i*u_i(x+t e_{j})-\check\eta_i*u_i(x)}{t}\Big|\Big)\dx\\ \nonumber
 &\leq\sum_{i=1}^\infty\int_{\R^{n}} A\Big(c_1\Big|2^{-i}\sup_{t\leq 2^{-i}}t^{-1}|\check\eta_i(\cdot+te_j)-\check\eta_i|\Big|*|2^{is}u_i|\Big)\dx.
\end{align}
 Observe that
 \begin{align}\label{march330}
\Big\|\sup_{t\leq 2^{-i}}2^{-i}t^{-1}(\check\eta_i(\cdot+te_j)-\check\eta_i)\Big\|_{L^1(\R^n)}&\leq \bigg\|\sup_{t\leq 2^{-i}}\frac{1}{t}\int_0^t2^{-i}|\nabla \check\eta_i(\cdot+se_j)|\,\dd s\bigg\|_{L^1(\R^n)}\\ \nonumber
&= \bigg\|\sup_{t\leq 2^{-i}}\frac{1}{t}\int_0^t|\nabla \check\eta(\cdot+s2^ie_j)|\,\dd s\bigg\|_{L^1(\R^n)}\\ \nonumber
&\leq  \bigg\|\sup_{z\in B_1(0)}|\nabla \check\eta(\cdot+z)|\bigg\|_{L^1(\R^n)}\\ \nonumber
&=  \bigg\|\sup_{z\in B_1(0)}|\nabla \check\eta(\cdot+z)|\bigg\|_{L^1(B_2(0))}\\ \nonumber
& \quad + \bigg\|\sup_{z\in B_1(0)}\big(|\nabla \check\eta(\cdot+z)||\cdot+z|^{n+1}\big)|\cdot+z|^{-n-1}\bigg\|_{L^1(\R^n\setminus B_2(0))}\\   \nonumber &\leq c  +c \big\||\cdot|^{-n-1}\big\|_{L^1(\R^n\setminus B_1(0))}\leq\,c.
\end{align}
In equation \eqref{march330},  the equalities $\check\eta_i(\cdot)=2^{in}\check\eta(2^i\cdot)$ and  $\nabla \check\eta_i (\cdot)=2^{i(n+1)}\nabla \check\eta(2^i\cdot)$,  as well as  the membership $\check\eta\in\mathscr S(\R^n)$ play a role.
\\ Thanks to \eqref{march330}, 
the convolution inequality   \eqref{conv5} yields
\begin{align}\label{march331}
\sum_{i=1}^\infty&\int_{\R^{n}} A\Big(c_1 2^{-i}\sup_{t\leq 2^{-i}}\Big|t^{-1}(\check\eta_i(\cdot+te_j)-\check\eta_i)\Big|*|2^{is}u_i|\Big)\dx
\leq\sum_{i=1}^\infty\int_{\R^{n}} A\big(c_2 2^{is}|u_i|\big)\dx.
\end{align}
 Next, let us focus on the term $\mathrm{II}$.
Define the sequence $\{a_i^{\mathrm{II}}\}$ as
$a_i^{\mathrm{II}}=2^{is}$ for {\color{black} $i <0$} and $a_i^{\mathrm{II}}=0$ for {\color{black} $i\geq0$}, and  the sequence $\{b_i^{\mathrm{II}}\}$  as
$b_i^{\mathrm{II}}=2^{is}|u_i(x)|$. Inasmuch as  $\{a_l^{\mathrm{II}}\}\in \ell^1(\mathbb Z)$, from inequality \eqref{dec5} again we infer
that
\begin{align}\label{march335}
\mathrm{II}&=\sum_{l\in\mathbb Z}A\bigg(c\sum_{i\in\mathbb Z}a_{l-i}^{\mathrm{II}}b_i^{\mathrm{II}}\bigg)
\leq\sum_{i=1}^\infty\int_{\R^{n}} A\big(c_12^{is}\big|u_i\big|\big)\dx.
\end{align}
Combining inequalities \eqref{eq:2502}--\eqref{march335}
establishes inequality \eqref{eq:2903}. Inequality  \eqref{eq:2903A'}, for $s\in (0,1)$, follows from inequalities \eqref{march315} and \eqref{eq:2903}, via the first inequality in \eqref{eq:0603}.
\\ Assume  now that $s> 1$. A classical Poincar\'e inequality tells us that there exists a constant $c$, independent of $x$ and $r$, such that
\begin{equation}\label{poincare}
\inf_{\mathfrak q\in\mathcal P_{[ s]-1}}\dashint_{B_r(x)}\frac{|u-\mathfrak q|}{r^s}\dy \leq c\dashint_{B_r(x)}\frac{|\nabla ^{[s]}u|}{r^{\{s\}}}\dy
\end{equation}
for  $u\in W^{[s],1}(B_r(x))$.
On the other hand, for each function $u$ from this space there exists a polynomial $\mathfrak p \in \mathcal P_{[s]}$ such that 
\begin{equation}\label{poincare1}
\osc^{\{s\}} (\nabla^{[ s]} u)(x,r)= \dashint_{B_r(x)}\frac{|\nabla ^{[s]}(u -\mathfrak p ) |}{r^{\{s\}}}\dy.
\end{equation}
An application of inequality \eqref{poincare} with $u$ replaced by $u -\mathfrak p$ then yields
\begin{align}\label{poincare2}
\osc^s (u)(x,r)&\leq  \inf_{\mathfrak q\in\mathcal P_{[ s]-1}}\dashint_{B_r(x)}\frac{|u-\mathfrak p-\mathfrak q|}{r^s}\dy \leq c  \dashint_{B_r(x)}\frac{|\nabla ^{[s]}(u -\mathfrak p ) |}{r^{\{s\}}}\dy = c \osc^{\{s\}} (\nabla^{[ s]} u)(x,r).
\end{align}
Hence,
\begin{align*}
\osc^s (u)(x,r)\leq\,c\osc^{\{s\}} (\nabla^{[ s]} u)(x,r)
\end{align*}
for every  $x \in \rn$, $r>0$ and $u\in W^{[s],1}(B_r(x))$. Thus,
\begin{equation}\label{jan71}
\int_0^1\int_{\R^n}A\big(\osc^s u(x,r)\big)\dx\frac{\dd r}{r} \leq \int_0^1\int_{\R^n}A\big(c\osc^{\{s\}} (\nabla^{[ s]} u)(x,r))\big)\dx\frac{\dd r}{r}.
\end{equation}
One can then start from equation \eqref{jan50}, with 
 $s$ replaced by ${\color{black}\{s\}}$ and $u$ replaced by $\nabla^{[ s]}u$, and repeat the same steps to obtain
\begin{align*}
\int_0^1\int_{\R^n}A\big(\osc^s (u)(x,r)\big)\dx\frac{\dd r}{r}&\leq\sum_{i=0}^\infty\int_{\R^n} A(c 2^{i\{s\}}|\varphi_i(D)\nabla^{[ s]}u|)\dx\\
&\leq\sum_{i=0}^\infty\int_{\R^n} A(c_12^{is}|\varphi_i(D)u|)\dx.
\end{align*}
 Note that the last step also makes use of the
 second inequality in \eqref{eq:0603}. Inequality \eqref{eq:2903A'} is established for every $s>0$.
\\ Now, consider   inequality \eqref{eq:2903A''}. Fix any  $N>{[ s]}/2$  and  any function $\mathfrak K_0$ as in Lemma \ref{lem:support}  , and set 
$\mathfrak K=\Delta^N\mathfrak K_0$.  Hence, $\int_{\R^{n}}\mathfrak K(z)\mathfrak q(z)\,\dd z=0$.
Given any $i\geq1$ and any polynomial $\mathfrak q\in \mathcal P_{[ s]}$,  one has that
\begin{align*}
2^{in}\mathfrak K(2^{i}\cdot)*u (x)=\int_{\R^{n}}\mathfrak K(z)\big(u(x-2^{-i}z)-\mathfrak q(x-2^{-i}z)\big)\dd z \quad \text{for $x\in \rn$.}
\end{align*}
Furthermore,  if $i_0$ is so large that  $\mathrm{spt}(\mathfrak K_0)\subset B_{2^{i_0}}(0)$, then
\begin{align*}
|2^{in}\mathfrak K(2^{i}\cdot)*u(x)|&\leq \int_{B_{2^{i_0}}(0)}|\mathfrak K(z)\big|u(x-2^{-i}z)-\mathfrak q(x-2^{-i}z)\big|\dd z\\
&{\color{black} \leq }\,c \dashint_{B_{2^{i_0-i}}(0)}\big|u(x-z)-\mathfrak q(x-z)\big|\,\dd z
=\,c \dashint_{B_{2^{i_0-i}}(x)}\big|u(z)-\mathfrak q(z)\big|\,\dd z \quad \text{for $x\in \rn$.}
\end{align*}
Consequently, thanks to inequality \eqref{march313} and the arbitrariness of $\mathfrak q$,  
\begin{align*}
\int_{\R^n}A(2^{is}|\varphi_{i}(D)u|)\dx & \leq 
\int_{\R^{n}} A\bigg(c\, 2^{is}\inf_{\mathfrak q\in \mathcal P_{[ s]}}\dashint_{B_{2^{i_0-i}}(x)}\big|u(z)-\mathfrak q(z)\big|\,\dd z\bigg)\dx\\& = 
\int_{\R^{n}} A\big(c\osc^s u(x,2^{i_0-i})\big)\dx.
\end{align*}
Thus,
\begin{align}\label{jan70}
\sum_{i=1}^\infty\int_{\R^n}&A(2^{is}|\varphi_{i}(D)u|)\dx  \leq\sum_{i=1}^\infty \int_{\R^{n}} A\big(c\osc^s u(x,2^{i_0-i})\big)\dx\\ \nonumber
& \leq\sum_{i=1}^\infty \int_{\R^{n}} A\big(c_1\osc^s u(x,2^{-i})\big)\dx+\sum_{i=1}^{i_0} \int_{\R^{n}} A\big(c_1\osc^s u(x,2^{i})\big)\dx\\ \nonumber
&\leq \int_0^1\int_{\R^{n}} A\big(c_2\osc^s u(x,r)\big)\dx\frac{\dd r}{r}+
 \sum_{i=1}^{i_0} \int_{\R^n}\dashint _{B_{2^j}(x)}A\Big(c_1\frac{|u(y)|}{2^{js}}\Big)\dy\dx
\\ &\leq  \int_0^1\int_{\R^{n}} A\big(c_2\osc^s u(x,r)\big)\dx\frac{\dd r}{r}+
 \int_{\R^n}A(c_2|u(y)|)\dy.\nonumber
\end{align}
 where the last but one inequality holds by a discretization argument as in inequality \eqref{jan50} and by Jensen's inequality, and the last one by Fubini's theorem.
\\ Finally,   since $\check\varphi_{0}\in L^1(\R^n)$, inequality \eqref{eq:1903'} implies that
\begin{align}\label{jan74}
\int_{\R^n}A(|\varphi_{0}(D)u|)\dx=\int_{\R^n}A(|\check\varphi_{0}*u|)\dx\leq \int_{\R^n}A(c|u|)\dx.
\end{align}
Inequality \eqref{eq:2903A''} follows from \eqref{jan70} and \eqref{jan74}.
\end{proof}

\begin{remark}{\rm 
A close inspection of the proof shows that, in fact, we derived  the  inequality
\begin{align*}
\int_{\R^n}A(c|u|)\dx&+\int_0^1\int_{\R^n}\osc^s_A (cu)(x,r)\dx\frac{\dd r}{r}\leq\sum_{i=0}^\infty\int_{\R^n} A(2^{is}|\varphi_i(D)u|)\dx.
\end{align*}
 This inequality
 is stronger than  \eqref{eq:2903A'}.  On the other hand, from the latter, we deduce that
\begin{align*}
\int_{\R^n}A(c|u|)\dx&+\int_0^1\int_{\R^n}\osc^s_A (cu)(x,r)\dx\frac{\dd r}{r}\\
&\leq \int_{\R^n}A(|u|)\dx+\int_0^1\int_{\R^n}A\big(\osc^s (u)(x,r)\big)\dx\frac{\dd r}{r}
\end{align*}
  for every $u \in \mathcal M(\rn)$.
Since a reverse inequality holds by Jensen's inequality, it turns out that replacing  $A(\osc^s)$   by $\osc^s_A$ in its definition leaves the 
 space   $\mathcal O^{s,A}(\R^n)$ unchanged, up to equivalent norms.}
\end{remark}

\section{Equivalent norms defined via a Littlewood-Paley decomposition }\label{sec:LP}

 


 The equivalence of Gagliardo-Slobodecki norms and norms defined via a Littlewood-Paley decomposition is established here.  Theorem  \ref{besovequiv} serves as a bridge between these two definitions.

\begin{proof}[Proof of Theorem \ref{thm:equi}] 
{\color{black}
The conclusion will follow if we show that
\begin{align}\label{apr55}
\sum_{k=0}^{[ s]}\int_{\R^n} A(|\nabla^k u|)\dx+  \iint_{\R^{n}\times\R^{n}}A&\Big(\frac{|\nabla^{[ s]} u(x)-\nabla^{[ s]}u(y)|}{|x-y|^{\{s\}}}\Big)\,\frac{\dd\,(x,y)}{|x-y|^n}
\\  \nonumber & \leq \sum_{i=0}^{\infty}\int_{\R^n}A(c 2^{si}|\varphi_{i}(D)u|)\dx
\end{align}
for $u\in F^{s,A}(\R^n)$, and 
\begin{align}\label{apr56}
 \sum_{i=0}^{\infty}\int_{\R^n}&A(2^{si}|\varphi_{i}(D)u|)\dx
\\   \nonumber & \leq \sum_{k=0}^{[ s]}\int_{\R^n}A(c|\nabla^k u|)\dx+\iint_{\R^{n}\times\R^{n}}A\Big(c\frac{|\nabla^{[ s]}u(x)-\nabla^{[ s]}u(y)|}{|x-y|^{\{s\}}}\Big)\,\frac{\dd\,(x,y)}{|x-y|^n}
\end{align}
for $u \in W^{s,A}(\rn)$.
%
\\ Assume first that $s\in (0,1)$. 
 Lemma \ref{lem:BsA} ensures that, if $u\in F^{s,A}(\R^n)$, then $u \in L^A(\rn)$.  Plainly,
\begin{align}\label{april103}
\int_{0}^{\infty}\int_{\R^{n}}A\Big(\frac{|u(x+\varrho e_{j})-u(x)|}{\varrho^s}\Big)\dx\frac{\dd \varrho}{\varrho}&=\int_{0}^{1/2}\int_{\R^{n}}A\Big(\frac{|u(x+\varrho e_{j})-u(x)|}{\varrho^s}\Big)\dx\frac{\dd \varrho}{\varrho}\\ \nonumber
& \quad +\int_{1/2}^{\infty}\int_{\R^{n}}A\Big(\frac{|u(x+\varrho e_{j})-u(x)|}{\varrho^s}\Big)\dx\frac{\dd \varrho}{\varrho}.
\end{align}
By property \eqref{lambdaA}   and the    first inequality in \eqref{eq:0603},
\begin{align}\label{apr100}
\int_{1/2}^{\infty}\int_{\R^{n}}&A\Big(\frac{|u(x+\varrho e_{j})-u(x)|}{\varrho^s}\Big)\dx\frac{\dd \varrho}{\varrho} \leq \int_{1/2}^{\infty}\int_{\R^{n}}\Big(A(c|u(x+\varrho e_{j}|)+A(c|u(x)|)\Big)\dx\frac{\dd \varrho}{\varrho^{1+s}}\\ \nonumber
&\leq \int_{\R^{n}}A(c_1|u(x)|)\dx\int_{1/2}^{\infty}\frac{\dd \varrho}{\varrho^{1+s}}
\leq \int_{\R^{n}}A(c_2|u|)\dx \leq\sum_{i=0}^{\infty}\int_{\R^n}A(c_3 2^{is}|\varphi_{i}(D)u|)\dx.
\end{align}
Furthermore,   owing to inequality \eqref{eq:2903},
\begin{align}\label{eq:2903'}
\begin{aligned}
\int_{0}^{1/2}\int_{\R^{n}}&A\Big(\frac{|u(x+\varrho e_{j})-u(x)|}{\varrho^s}\Big)\dx\frac{\dd \varrho}{\varrho} 
=\sum_{l=1}^\infty\int_{\R^{n}}\int_{2^{-l-1}}^{2^{-l}}A\Big(\frac{|u(x+\varrho e_{j})-u(x)|}{\varrho^s}\Big)\frac{\dd \varrho}{\varrho}\dx\\
&\leq
\sum_{l=1}^\infty \sup_{t\leq 2^{-l}}\int_{\R^{n}}A\Big(c 2^{ls}|u(x+t e_{j})-u(x)|\Big)\dx \leq\sum_{i=0}^{\infty}\int_{\R^n}A(c_12^{is}|\varphi_{i}(D)u|)\dx.
\end{aligned}
\end{align}
As shown in the proof of Theorem \ref{besovequiv}, the left-hand side of inequality \eqref{april103}  is equivalent to
\begin{align}\label{apr150}
\int_{\R^{n}}\int_{\R^n}A\Big(c\frac{|u(x)-u(y)|}{|x-y|^s}\Big)\frac{\dd\,(x,y)}{|x-y|^n}.
\end{align}
Thereby, combining equations \eqref{april103}--\eqref{eq:2903'} and making use of  inequality   \eqref{eq:0603}  yield \eqref{apr55}.
\\
Next, fix $u \in B^{s,A}(\rn)$ and 
 let $i\geq 1$.
Thanks to Lemma \ref{lem:support}, it suffices to prove inequality \eqref{apr56} with $\int_{\R^n}A(2^{is}|\varphi_{i}(D)u|)\dx$ replaced by
$\int_{\R^n}A(2^{is}|2^{in}\mathfrak K(2^{i}\cdot)*u|)\dx$,
where $\mathfrak K$ is as in that lemma.
Since $\int_{\R^{n}}\mathfrak K(z)\dd z =0$,
we have that
\begin{align*}
2^{in}\mathfrak K(2^{i}\cdot)*u  (x)=\int_{\R^{n}}\mathfrak K(z)(u(x-2^{-i}z)-u(x))\dd z \quad \text{for $x \in \rn$.}
\end{align*}
Thus, Jensen's inequality entails that
\begin{align*}
\int_{\R^n}A(2^{is}|2^{in}\mathfrak K(2^{i}\cdot)*u|)\dx & \leq \int_{\R^{n}}|\mathfrak K(z)|\int_{\R^n}A(c2^{is}|u(x-2^{-i}z)-u(x)|)\dx\,\dd z \\ 
& \leq \int_{\R^n} |\mathfrak K(z)|\sup_{2^{-i-1}\leq t\leq 2^{-i}}\int_{\R^{n}}A\Big(c\frac{|u(x-tz)-u(x)|}{t^s}\Big)\dx\,\dd z.
\end{align*}
From inequality \eqref{march313} and the fact that   $\mathfrak K$ has compact support, one deduces that
\begin{align}\label{apr148}
\sum_{i=1}^{\infty}\int_{\R^n}A(2^{is}|\varphi_{i}(D)u|)\dx
&\leq  \int_{\R^n} |\mathfrak K(z)|\sum_{i=0}^\infty\sup_{2^{-i-1}\leq t\leq 2^{-i}}\int_{\R^{n}}A\Big(c\frac{|u(x-tz)-u(x)|}{t^s}\Big)\dx\,\dd z\\ \nonumber
&\leq \int_{\R^n} |\mathfrak K(z)|\int_{0}^{\infty}{\color{black}\sup_{r\leq t}\int_{\R^{n}}A\Big(c_1\frac{|u(x-rz)-u(x)|}{t^s}\Big)\dx}\frac{\dd t}{t}\,\dd z\\ \nonumber
&\leq\int_{\R^n} |\mathfrak K(z)|\int_{0}^{\infty}\sup_{r\leq t}\int_{\R^{n}}A\Big(c_2\frac{|u(x-rz/|z|)-u(x)|}{t^s}\Big)\dx\frac{\dd t}{t}\,\dd z\\ \nonumber
&\leq\int_{0}^{\infty}\sup_{|h|\leq t}\int_{\R^{n}}A\Big(c_3\frac{|u(x+h)-u(x)|}{t^s}\Big)\dx\frac{\dd t}{t}.
\end{align} 
The next step consists in removing the supremum in the last term. First of all, note that, for $h\in\R^n$ with $|h|< t$,
\begin{align*}
\int_{\R^{n}}A\Big(\frac{|u(x+h)-u(x)|}{t^s}\Big)\dx&\leq \int_{\R^{n}}A\Big(c\dashint_{B_t(0)}\frac{|u(x+y)-u(x)|}{t^s}\dy\Big)\dx\\& \quad +\int_{\R^{n}}A\Big(c\dashint_{B_t(0)}\frac{|u(x+y)-u(x+h)|}{t^s}\dy\Big)\dx\\
&= \int_{\R^{n}}A\Big(c\dashint_{B_t(0)}\frac{|u(x+y)-u(x)|}{t^s}\dy\Big)\dx\\&  \quad +\int_{\R^{n}}A\Big(c\dashint_{B_t(0)}\frac{|u(x)-u(x+y-h)|}{t^s}\dy\Big)\dx\\
&= \int_{\R^{n}}A\Big(c\dashint_{B_t(0)}\frac{|u(x+y)-u(x)|}{t^s}\dy\Big)\dx\\&  \quad+\int_{\R^{n}}A\Big(c\dashint_{B_t(-h)}\frac{|u(x)-u(x+y)|}{t^s}\dy\Big)\dx\\
&\leq \int_{\R^{n}}A\Big(c_1\dashint_{B_{2t}(0)}\frac{|u(x+y)-u(x)|}{t^s}\dy\Big)\dx.
\end{align*}
Hence,
\begin{align}\label{apr145}
\int_{0}^{\infty}\sup_{|h|\leq t}&\int_{\R^{n}}A\Big(\frac{|u(x+h)-u(x)|}{t^s}\Big)\dx\frac{\dd t}{t}\\ \nonumber
&\leq \int_{0}^{\infty}\int_{\R^{n}}A\Big(c_1\dashint_{B_{2t}(0)}\frac{|u(x+y)-u(x)|}{t^s}\dy\Big)\dx\frac{\dd t}{t}\\ \nonumber
&\leq \int_{0}^{\infty}\int_{\R^{n}}A\Big(\frac{c_2}{t}\int_{0}^{\color{black} 2t}\int_{\partial B_\varrho(0)}\frac{|u(x+y)-u(x)|}{t^s}\frac{{\color{black} {\rm d} \mathcal H^{n-1}(y)}}{|y|^{n-1}}\,\dd\varrho \Big)\dx\frac{\dd t}{t}\\ \nonumber
&= \int_{0}^{\infty}\int_{\R^{n}}A\Big(\frac{c_3}{t}\int_{0}^{\color{black} t}\int_{\partial B_1(0)}\frac{|u(x+\varrho y)-u(x)|}{t^s}{\color{black} {\rm d} \mathcal H^{n-1}(y)}\,\dd\varrho\Big)\dx\frac{\dd t}{t}.
\end{align}
Next, observe that the following Hardy-type inequality
\begin{align}\label{apr140}
\int_0^\infty A\Big(\frac{1}{t^{1+s}}\int_0^t f(\varrho) \,\dd\varrho\Big)\frac{\dt}{t}\leq \int_0^\infty A\Big(\frac{f(t)}{t^s}\Big)\frac{\dt}{t}
\end{align}
holds for every measurable
 function $f: [0, \infty) \to [0, \infty)$.  This is a consequence of the chain
\begin{align}\label{apr141}
\int_0^\infty A\Big(\frac{1}{t^{1+s}}\int_0^tf(\varrho)\,\dd\varrho\Big)\frac{\dt}{t} & \leq 
\int_0^\infty A\Big(\frac{1}{t}\int_0^t\frac{f(\varrho)}{\varrho^s}\,\dd\varrho\Big)\frac{\dt}{t} \leq
\int_0^\infty \frac{1}{t} \int_0^tA\Big(\frac{f(\varrho)}{\varrho^s}\Big)\,\dd\varrho\frac{\dt}{t} 
\\ \nonumber & = \int_0^\infty A\Big(\frac{f(\varrho)}{\varrho^s}\Big) \int_\varrho ^\infty \frac{\dt}{t^2}\, \dd\varrho = \int_0^\infty A\Big(\frac{f(t)}{t^s}\Big)\frac{\dt}{t},
\end{align}
where the second inequality holds by Jensen's inequality.
\\ From inequality \eqref{apr145}, via an application of inequality \eqref{apr141} with
\begin{align*}
f(\varrho)=\int_{\partial B_1(0)}|u(x+\varrho y)-u(x)|\dy \quad \text{for $\varrho \geq 0$,}
\end{align*}
and the use of Jensen's inequality again, one can deduce that
\begin{align}\label{apr147}
\int_{0}^{\infty}\sup_{|h|\leq t}&\int_{\R^{n}}A\Big(\frac{|u(x+h)-u(x)|}{t^s}\Big)\dx\frac{\dd t}{t}\\ \nonumber
&\leq  \int_{0}^{\infty}\int_{\R^{n}}A\Big(c\int_{\partial B_1(0)}\frac{|u(x+t y)-u(x)|}{t^s}{\color{black} {\rm d} \mathcal H^{n-1}(y)}\Big)\dx\frac{\dd t}{t}\\ \nonumber
&\leq  \int_{0}^{\infty}\int_{\R^{n}}\int_{\partial B_1(0)}A\Big(c_1\frac{|u(x+t y)-u(x)|}{t^s}\Big){\color{black} {\rm d} \mathcal H^{n-1}(y)}\dx\frac{\dd t}{t}\\ \nonumber
&= \int_{\R^{n}}\int_{0}^{\infty}\int_{\partial B_t(0)}A\Big(c_1\frac{|u(x+y)-u(x)|}{t^s}\Big)\frac{{\color{black} {\rm d} \mathcal H^{n-1}(y)}}{t^{n-1}}\frac{\dd t}{t}\dx\\ \nonumber
&= \int_{\R^{n}}\int_{\R^n}A\Big(c_1\frac{|u(x+y)-u(x)|}{|y|^s}\Big)\frac{\dy}{|y|^n}\dx
\\ \nonumber
&= \int_{\R^{n}}\int_{\R^n}A\Big(c_1\frac{|u(x)-u(y)|}{|x-y|^s}\Big)\frac{\dd\,(x,y)}{|x-y|^n}.
\end{align}
Inequalities \eqref{apr148}, \eqref{apr145} and \eqref{apr147} imply that
\begin{align}\label{apr130}
\sum_{i=1}^{\infty}\int_{\R^n}A(2^{is}|\varphi_{i}(D)u|)\dx \leq  \int_{\R^{n}}\int_{\R^n}A\Big(c\frac{|u(x)-u(y)|}{|x-y|^s}\Big)\frac{\dd\,(x,y)}{|x-y|^n}.
\end{align}
Combining the latter inequality with   \eqref{jan74} yields inequality  \eqref{apr56}. 
%
\\ Assume now that 
$s>1$. Let $u \in F^{s,A}(\rn)$. From inequality \eqref{eq:0603} applied with $u$ replaced by $\nabla ^k u$  and $s$  by $\{s\}$, 
and inequality \eqref{apr55}  with $u$ replaced by  $\nabla^{[ s]}u$ and $s$  by $\{s\}$, one obtains that
%
%
\begin{align}\label{apr153}
\sum_{k=0}^{[ s]}&\int_{\R^n}A(|\nabla^k u|)\dx+\iint_{\R^{n}\times\R^{n}}A\Big(\frac{|\nabla^{[ s]} u(x)-\nabla^{[ s]}u(y)|}{|x-y|^{\{s\}}}\Big)\,\frac{\dd\,(x,y)}{|x-y|^n}
\\ \nonumber
&\leq\sum_{k=0}^{[ s]}\sum_{i=0}^\infty\int_{\R^n}A\big(c2^{i\{s\}}\big|\varphi_i(D)\nabla^k u\big|\big)\dx+\sum_{i=0}^\infty\int_{\R^n}A\big(c2^{i\{s\}}\big|\varphi_i(D)\nabla^{[ s]}u\big|\big)\dx.
\end{align}
Moreover, by the second inequality in   \eqref{eq:0603} and the  monotonicity of the function $A$, 
\begin{align}\label{apr154}
\sum_{k=0}^{[ s]}\sum_{i=0}^\infty&\int_{\R^n}A\big(c2^{i\{s\}}\big|\varphi_i(D)\nabla^k u\big|\big)\dx+\sum_{i=0}^\infty\int_{\R^n}A\big(c2^{i\{s\}}\big|\varphi_i(D)\nabla^{[ s]}u\big|\big)\dx \\ \nonumber
&\leq\sum_{k=0}^{[ s]}\sum_{i=0}^\infty\int_{\R^n}A\big(c2^{i(k+\{s\})}\big|\varphi_i(D)u\big|\big)\dx+\sum_{i=0}^\infty\int_{\R^n}A(c2^{is}|\varphi_i(D)u|)\dx\\ \nonumber
&\leq\sum_{i=0}^\infty\int_{\R^n}A(c_12^{is}|\varphi_i(D)u|)\dx. 
\end{align} 
 Inequality  \eqref{apr55} follows from \eqref{apr153} and \eqref{apr154}. 
\\ As far as   inequality \eqref{apr56} is concerned,}
owing to inequalities \eqref{jan70} and \eqref{jan71}, 
we have that
\begin{align}\label{apr155}
\sum_{i=1}^\infty\int_{\R^n}A(2^{si}|\varphi_{i}(D)u|)\dx 
&\leq \int_0^1\int_{\R^{n}} A\big(c\osc^s u(x,r)\big)\dx\frac{\dd r}{r}\\ \nonumber
&\leq \int_0^1\int_{\R^{n}} A\big(c_1\osc^{\{s\}} \nabla^{[s]} u(x,r)\big)\dx\frac{\dd r}{r}.
\end{align}
 Clearly,
\begin{align}\label{apr156}
\osc^{\{s\}} \nabla^{[s]} u(x,r)&{\color{black} \leq }\dashint_{B_r(x)}\frac{|\nabla^{[s]}u(y)-(\nabla^{[s]}u)_{B_r(x)}|}{r^{\{s\}}}\dy \\ \nonumber
& \leq\dashint_{B_r(x)}\dashint_{B_r(x)}\frac{|\nabla^{[s]}u(y)-\nabla^{[s]}u(z)|}{r^{\{s\}}}\dy\,\dd z\\ \nonumber
&\leq\dashint_{B_r(x)}\frac{|\nabla^{[s]}u(y)-\nabla^{[s]}u(x)|}{r^{\{s\}}}\dy+\dashint_{B_r(x)}\frac{|\nabla^{[s]}u(y)-\nabla^{[s]}u(x)|}{r^{\{s\}}}\,\dd z\\ \nonumber
&=2\dashint_{B_r(0)}\frac{|\nabla^{[s]}u(x+y)-\nabla^{[s]}u(x)|}{r^{\{s\}}}\dy.
\end{align}
Exploiting  inequalities \eqref{apr155} and \eqref{apr156} and  Jensen's inequality enable one to infer that
\begin{align}\label{jan72}
\sum_{i=1}^\infty\int_{\R^n}A(2^{si}|\varphi_{i}(D)u|)\dx &\leq\int_0^\infty\int_{\R^n} \dashint_{B_r(0)}A\bigg(c\frac{|\nabla^{[s]}u(x+y)-\nabla^{[s]}u(x)|}{r^{\{s\}}}\bigg)\dy\dx\frac{\dd r}{r}\\
&=\int_0^\infty\dashint_{B_r(0)}\int_{\R^n} A\bigg(c\frac{|\nabla^{[s]}u(x+y)-\nabla^{[s]}u(x)|}{r^{\{s\}}}\bigg)\dx\dy\frac{\dd r}{r}\nonumber\\ 
&\leq\int_0^\infty\sup_{|y|<t}\int_{\R^n} A\bigg(c\frac{|\nabla^{[s]}u(x+y)-\nabla^{[s]}u(x)|}{r^s}\bigg)\dx\frac{\dd r}{r}. \nonumber
\end{align}
An application of inequality \eqref{apr147} with $u$ replaced by $\nabla^{[s]}u$ and $s$ by $\{s\}$ yields
  \begin{align}\label{jan73}
\int_0^\infty \sup_{|y|<t}\int_{\R^n} & A\bigg(c\frac{|\nabla^{[s]}u(x+y)-\nabla^{[s]}u(x)|}{r^s}\bigg)\dx\frac{\dd r}{r} \\ \nonumber &  \leq 
 \int_{\R^{n}}\int_{\R^n}A\Big(c_1\frac{|\nabla^{[s]}u(x+y)-\nabla^{[s]}u(x)|}{|y|^{\{s\}}}\Big)\frac{\dy}{|y|^n}\dx
\\ \nonumber
&=  
\int_{\R^{n}}\int_{\R^n}A\Big(c_1\frac{|\nabla^{[s]}u(x)-\nabla^{[s]}u(y)|}{|x-y|^{\{s\}}}\Big)\frac{\dy}{|x-y|^n}\dx.
\end{align}
{\color{black} Inequality in \eqref{apr56} follows from \eqref{jan72}, \eqref{jan73} and \eqref{jan74}.}
\end{proof}

\section{Proof of Theorem \ref{thm:main}}\label{sec:emb}


We are now in a position to prove our main result.

\begin{proof}[Proof of Theorem \ref{thm:main}] Owing to Theorem \ref{thm:equi}, it suffices to prove  inequality \eqref{normineq} with the norms in $W^{s,A}(\rn)$ and  $W^{\frac n{s-r},A}(\rn)$ replaced by those in $F^{s,A}(\rn)$ and  $F^{\frac n{s-r},A}(\rn)$.
\\
Let  $\eta\in C_{0}^{\infty}(\R^{n})$ be a function as in the proof of Theorem \ref{thmosc}, namely such that $\eta_{i}(x)=\eta(2^{-i}x)$ equals $1$ in $\mathrm{spt}(\varphi_{i})$ for $i\geq 1$. Assume that  
$u\in F^{A,s}(\R^{n})$.
Thanks to equation \eqref{jan76},
\begin{align*}
\int_{\R^n}A_{\frac{n}{s-r}}(2^{ir}|\varphi_i(D)u|)\dx  = \int_{\R^n}A_{\frac{n}{s-r}}(|(2^{i(r-s)}\check{\eta}_{i})*(2^{is}\varphi_{i}(D)u)|)\dx
\end{align*}
 for $i\geq 1$.
On setting $p=\frac n{n-s+r}$, one obtains  that
\begin{align*}
\|\check{\eta}_{i}\|_{L^{p}(\R^{n})} & = 2^{in}\Big(\int_{\R^{n}}|\check{\eta}(2^{i}z)|^{p}\,\mathrm{d} z \Big)^{\frac{1}{p}} = c2^{\frac{in}{p'}}. 
\end{align*}
Inasmuch as
$\frac{in}{p'}=i(s-r)$, we have that $\|(2^{i(r-s)}\check{\eta}_{i}\|_{L^p(\R^n)}\leq c$ for $i\geq 1$.. An application of   the convolution inequality  \eqref{conv5} (with $s$ replaced by $s-r$) yields
\begin{align}\label{eq:convolution}
 \int_{\R^n}A_{\frac{n}{s-r}}(|(2^{i(r-s)}\check{\eta}_{i})*(2^{is}\varphi_{i}(D)u)|)\dx\leq  \int_{\R^n}A(c|2^{is}\varphi_{i}(D)u|)\dx.
\end{align}
On the other hand, {\color{black} replacing $\eta_i$ by a compactly supported function which equals $1$ in the support of $\varphi_0$} and arguing as above tell us that 
\begin{align}\label{apr160}
\int_{\R^n}A_{\frac{n}{s-r}}(|\varphi_0(D)u|)\dx\leq \int_{\R^n}A(c|\varphi_{0}(D)u|)\dx.
\end{align}
Form inequalities \eqref{eq:convolution} and \eqref{apr160} one infers that
\begin{align*}
 \sum_{i=0}^\infty\int_{\R^n}A_{\frac{n}{s-r}}(2^{ir}|\varphi_i(D)u|)\dx\leq  \sum_{i=0}^\infty\int_{\R^n}A(c|2^{is}\varphi_{i}(D)u|)\dx.
\end{align*}
Hence, inequality \eqref{eq:thmmain} follows, via the definition 
of the norm in $F^{s,A}(\rn)$.
\end{proof}

\section*{Compliance with Ethical Standards}\label{conflicts}
\subsection*{Funding}

This research was partly funded by:

\begin{enumerate}
\item Research Project 201758MTR2  of the Italian Ministry of Education, University and
Research (MIUR) Prin 2017 ``Direct and inverse problems for partial differential equations: theoretical aspects and applications'';
\item GNAMPA of the Italian INdAM -- National Institute of High Mathematics
(grant number not available).
\end{enumerate}

\subsection*{Conflict of Interest}

The authors declare that they have no conflict of interest.

\end{document}